\newcommand{\subj}[1]{\par\noindent{\bf AMS Subject Classifications: }#1.}
\newcommand{\keyw}[1]{\par\noindent{\bf Keywords: }#1.}
\numberwithin{equation}{section}
\numberwithin{figure}{section}
\newtheorem{theorem}{Theorem}[section]
\newtheorem{lemma}[theorem]{Lemma}
\newtheorem{proposition}[theorem]{Proposition}
\theoremstyle{definition}
\newtheorem{definition}[theorem]{Definition}
\newtheorem{example}[theorem]{Example}
\theoremstyle{remark}
\newtheorem{remark}[theorem]{Remark}
\date{}
\newcommand{\R}{\mathbb{R}}
\newcommand{\T}{\mathbb T}
\newcommand{\Z}{\mathbb Z}
\def\qed{\hbox{${\vcenter{\vbox{
  \hrule height 0.4pt\hbox{\vrule width 0.4pt height 6pt
  \kern5pt\vrule width 0.4pt}\hrule height 0.4pt}}}$}}
\long\def\symbolfootnote[#1]#2{\begingroup%
\def\thefootnote{\fnsymbol{footnote}}\footnote[#1]{#2}\endgroup}
\begin{document}
\title{ \center\Large\bf  Time scales: from Nabla calculus to Delta calculus and vice versa via duality }
\author{{\bf M. Cristina Caputo}\\
The University of Texas at Austin \\
Department of Mathematics\\
\\
\tt }

\maketitle

\begin{abstract}
In this note we show how one can obtain results from the {\it nabla} calculus from
results on the {\it delta} calculus and vice versa via a duality argument.~We provide applications of the main results to the calculus of variations on time scales.

\end{abstract}
\subj{39A10, 26E70, 49K05}
\keyw{Time scales, nabla calculus, delta calculus, calculus of variations}

\bibliographystyle{plain}

\label{lastpage}

\section{Introduction}\label{sec1}
The {\it time scale delta calculus} was introduced for the first time in 1988 by Hilger~\cite{Hil} to unify the theory of difference equations and the theory of differential equations.  It was extensively studied by Bohner~\cite{Boh} and Hilscher and Zeidan~\cite{Hilzei} who introduced the calculus of variations on the {\it time scale delta}  calculus (or simply {\it delta} calculus).~In 2001 the  {\it time scale nabla calculus} (or simply {\it nabla} calculus) was introduced by Atici and Guseinov~\cite{Atici}. 

Both theories of the {\it delta} and the {\it nabla} calculus  can be applied to any field that requires the study of both continuous and discrete data. For instance, the {\it nabla} calculus has been applied to maximization (minimization) problems in economics~\cite{ABL,Atici}. Recently several authors have contributed to the development of the calculus of variations on time scales (for instance, see~\cite{AMC,tormal,tormar}).

To the best of the author's knowledge there is no known technique to obtain results from the {\it nabla} calculus directly from results on the {\it delta} calculus and vice versa. In this note we underline that, in fact, this is possible.
We show that the two types of calculus,  the {\it nabla} and the {\it delta} on time scales, are the ``dual'' of each other. One can reciprocally obtain results
for one type of calculus from the other and vice versa without making any assumptions on the regularity of the time scales (as it was done in~\cite{GuSi}). We prove that results for the {\it nabla} (respectively  the {\it delta}) calculus can be obtained by the dual analogous ones which will be in the  {\it delta} (respectively  {\it nabla}) context. Therefore, if they have already been proven for the  {\it delta} case (respectively  the {\it delta}), it is not necessary to reprove them for the  {\it nabla} setting  (respectively  {\it nabla}).  

This article is organized as follows: in second section we review some basic definitions. In third section we introduce the {\it dual} time scales. In the fourth section we derive a few properties related to duality. In the fifth section we state the Duality Principle, which is the main result of the article, and we apply it to a few examples. Finally, in the last section, we apply the Duality Principle to the calculus of variations on time scales.
\section{ Review of basic definitions}\label{sec2}
We first review some basic definitions and hence introduce both types of calculus (for a complete list of definitions for the~{\it delta} calculus see the pioneering book by Bohner and Peterson~\cite{BohPet}).

A {\it time scale} $\T$ is any closed nonempty subset $\T$  of $\R$.

The {\it jump operators} $ \sigma$,  $\rho:\T\rightarrow\T$ are defined by
$$\sigma(t)=\inf\{s\in\T: s>t\}, \,\,\,\,\, \hbox{and}\,\,\,\rho(t)=\sup\{s\in\T: s<t\}, $$
with $\inf \emptyset :=\sup \T,$ $\sup \emptyset:= \inf \T.$ A point $t\in\T$ is called {\it right-dense} if $\sigma(t)=t$,
{\it right-scattered} if $\sigma(t)>t$,
{\it left-dense} if $\rho(t)=t$, {\it left-scattered} if $\rho(t)<t$.

The {\it forward graininess} $\mu:\T\rightarrow \R$ is defined by $\mu(t)=\sigma(t)-t$, and the {\it backward graininess} $\nu:\T\rightarrow\R$ is defined by $\nu(t)=t-\rho(t)$.

Given a time scale $\T$, we denote $\T^{\kappa}:=\T\setminus (\rho(\sup \T),\sup\T]$, if $\sup\T<\infty$ and  $\T^{\kappa}:=\T$ if  $\sup\T=\infty$. Also $\T_{\kappa}:=\T\setminus [\inf\T, \sigma(\inf\T))$ if $\inf\T>-\infty$ and $\T_{\kappa}=:\T$ if $\inf\T=-\infty$.
In particular, if $a,b\in\T$ with $a<b$, we denote by $[a,b]$ the interval $[a,b]\cap\T$. It follows that
$$[a,b]^{\kappa}=[a,\rho(b)],\,\,\,\,\hbox{and}\,\,\,\, [a,b]_{k}=[\sigma(a),b].$$

Of course, $\R$ itself is one trivial example of time scale, but one could also take $\T$ to be the Cantor set. For more interesting examples of time scales we suggest reading~\cite{BohPet}.

Let $f$ be a function defined on $\T$, we say that:
\begin{definition}
   $f$  is rd-continuous (or right-dense continuous) (we write $f\in C_{rd}$) if it is continuous at the right-dense points and its left-sided limits exist (finite) at all left-dense points; $f$ is ld-continuous (or left-dense continuous) if it is continuous at the left-dense points and its right-sided limits exist (finite) at all right-dense point.\end{definition}
  
\subsection{Definition of derivatives}
\begin{definition}
   A function $f:\T\rightarrow\R$ is said to be {\it delta} differentiable at $t\in \T^{\kappa}$ if
for all $\epsilon>0$ there exists $U$ a neighborhood of $t$ such that for some $\alpha$, the inequality
$$|f(\sigma(t))-f(s)-\alpha(\sigma(t)-s)|  < \epsilon |\sigma(t)-s|,$$
is true for all $s\in U$. We write $f^{\Delta}(t)=\alpha$.
\end{definition}
  
\begin{definition}
   $f:\T\rightarrow\R$ is said to be {\it delta} differentiable on $\T$ if $f:\T\rightarrow\R$ is {\it delta} differentiable for all $t\in \T^{\kappa}$.
\end{definition}
  
It is easy to show that, if $f$ is~{\it delta} differentiable on $\T$, then the following formula holds
$$f^{\sigma}=f+\mu f^{\Delta},$$
where $f^{\sigma}=f \circ \sigma $ (the proof can be found in~\cite{BohPet}).

\begin{definition}
    A function $f:\T\rightarrow\R$ is said to be {\it nabla} differentiable at $t\in \T_{\kappa}$ if for all $\epsilon>0$ there exists $U$ a neighborhood of $t$ such that for some $\beta$, the inequality
$$|f(\rho(t))-f(s)-\beta(\rho(t)-s)|  < \epsilon |\rho(t)-s|,$$
is true for all $s\in U$. We write $f^{\nabla}(t)=\beta$.
\end{definition}
  
\begin{definition}
   $f:\T\rightarrow\R$ is said to be {\it nabla} differentiable on $\T$ if $f:\T\rightarrow\R$ is {\it nabla} differentiable for all $t\in \T_{\kappa}$.
\end{definition}
  
It is easy to show that, if $f$ is~{\it nabla} differentiable on $\T$, then the following formula holds
$$f^{\rho}=f-\nu f^{\nabla},$$
where $f^{\rho}=f \circ \rho $ (this formula can be seen in~\cite{ABL}).

\begin{definition}
   $f$ is rd-continuously {\it delta} differentiable (we write $f\in C^1_{rd}$) if $f^{\Delta}(t)$ exists for all $t\in\T^k$ and $f^{\Delta}\in C_{rd}$, and
$f$ is ld-continuously {\it nabla} differentiable (we write $f\in C^1_{ld}$) if $f^{\nabla}(t)$ exists for all $t\in\T_k$ and $f^{\nabla}\in C_{ld}$.\end{definition}
  
\begin{remark} If $\T=\R$, then the notion of {\it delta} derivative and {\it nabla} derivative coincide and they denote the standard derivative we know from calculus, however, when $\T=\Z$, then they do not coincide (see~\cite{BohPet}). 
\end{remark}
\section{Dual time scales}
In this section we introduce the definition of {\it dual} time scales. We will see that our main result develops merely from this basic definition. A {\it dual} time scale is just the ``reverse'' time scale of a given time scale. More precisely, we define it as follows:

\begin{definition}
   Given a time scale $\T$ we define the dual time scale $\T^{\star}:= \{ s\in \R | -s\in \T\}$.\end{definition}

Once we have defined a {\it dual} time scale, it is natural to extend all the definitions of Section~\ref{sec2}. We now introduce some notation regarding the correspondence between the definitions on a time scale and  its dual.

Let $\T$ be  a time scale.  If $\rho$ and $\sigma$ denote its associated  jump functions, then we denote by $\hat \rho$ and $\hat \sigma$  the jump functions associated to $\T^{\star}$.
 If $\mu$ and  $\nu$ denote, respectively, the {\it forward graininess} and {\it backward graininess} associated to $\T$, then we denote by $\hat \mu$ and $\hat \nu$, respectively, the  {\it forward graininess} and the {\it backward graininess}  associated to $\T^{\star}$.

Next, we define another fundamental ``dual'' object, i.e., the ``dual'' function.

\begin{definition}
   Given a function $f:\T\rightarrow \R$ defined on time scale $\T$ we define the dual function $f^{\star}:\T^{\star}\rightarrow\R$ on the time scale $\T^{\star}:= \{ s\in \R | -s\in \T\}$ by $f^{\star}(s):=f(-s)$ for all $s\in\T^{\star}$.
\end{definition}
  
\begin{definition}
  \label{lemt} Given a time scale $\T$ we refer to the~{\it delta} calculus (resp.~{\it nabla} calculus) any calculation that involves~{\it delta} derivatives (resp.~{\it nabla} derivatives). 
\end{definition}

\section{Dual correspondences}
In this section we deduce some basic lemmas which follow easily from the definitions. These lemmas concern the relationship between {\it dual} objects. 
We will use the following notation: given the quintuple $(\T, \sigma, \rho, \mu, \nu)$, where
 $\T$ denotes a time scale with jump functions, $\sigma$, $\rho$, and associated  {\it forward graininess} $\mu$ and {\it backward graininess} $\nu$,
 its dual will be
$(\T^{\star}, \hat\sigma, \hat\rho, \hat \mu, \hat\nu)$ where  $\hat\sigma, \hat\rho, \hat \mu,$ and $\hat\nu$ will be given as in Lemma~\ref{lem1} and~\ref{lem2} that we will prove in this section.
Also,  $\Delta$ and $\nabla$ will denote the derivatives for the time scale $\T$ and  $\hat\Delta$ and $\hat\nabla$ will  denote the derivatives for the time scale $\T^{\star}$. 

\begin{lemma} If $a,b\in\T$ with $a<b$,
$$([a,b])^{\star}=[-b,-a].$$

\end{lemma}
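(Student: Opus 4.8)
The plan is to prove the set equality $([a,b])^{\star}=[-b,-a]$ by unwinding the definitions of the dual time scale and of the interval notation on a time scale. Recall that for $a,b\in\T$ with $a<b$, the symbol $[a,b]$ abbreviates $[a,b]\cap\T$, and that the dual time scale is $\T^{\star}=\{s\in\R\mid -s\in\T\}$. Since $a<b$ in $\T\subseteq\R$, we have $-b<-a$, so the right-hand side $[-b,-a]$ denotes $[-b,-a]\cap\T^{\star}$ and is a legitimate interval on the dual time scale; I would note this at the outset so that both sides are interpreted as subsets of $\T^{\star}$.

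I would prove the equality by double inclusion, or equivalently by a single chain of logical equivalences applied to an arbitrary point. First I would take $s\in\R$ and observe the following equivalences: $s\in([a,b])^{\star}$ holds if and only if $-s\in[a,b]=[a,b]\cap\T$, which holds if and only if $a\le -s\le b$ together with $-s\in\T$. Negating through the inequalities, $a\le -s\le b$ is equivalent to $-b\le s\le -a$, and the membership condition $-s\in\T$ is, by the definition of the dual time scale, exactly the statement $s\in\T^{\star}$. Combining these, $s\in([a,b])^{\star}$ if and only if $-b\le s\le -a$ and $s\in\T^{\star}$, which is precisely the statement $s\in[-b,-a]\cap\T^{\star}=[-b,-a]$. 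This establishes the equality.

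I expect no real obstacle here; the proof is a routine verification once the notational conventions are made explicit. The only point requiring genuine care is bookkeeping: one must be careful to apply the sign reversal consistently to both the order relation and the membership condition, and to keep track that the interval brackets on the two sides refer to intersections with two \emph{different} time scales ($\T$ on the left, $\T^{\star}$ on the right). A secondary subtlety worth a brief remark is that multiplication by $-1$ is an order-reversing bijection of $\R$, which is why the endpoints swap and the inequality direction flips; this is the conceptual heart of why the dual interval comes out reversed. Since the statement is a set identity rather than an estimate, the clean way to present it is the single chain of ``if and only if'' steps above, avoiding any need to argue the two inclusions separately.
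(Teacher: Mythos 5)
Your proof is correct and follows essentially the same route as the paper's: a single chain of equivalences $s\in([a,b])^{\star}$ iff $-s\in[a,b]$ iff $s\in[-b,-a]$. You simply spell out the intersection-with-$\T$ convention and the order-reversal under negation more explicitly than the paper does.
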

 \begin{proof} The proof is straightforward. In fact,
 $$s\in ([a,b])^{\star}\,\,\,\hbox{iff} \,\,\,\,-s\in [a,b] \,\,\,\hbox{iff} \,\,\,\, s\in [-b,-a].$$
    \end{proof}
  
\begin{lemma} \label{lem1} Given $ \sigma$, $\rho:\T\rightarrow \T$, the jump operators for $\T$, then the jump operators for $\T^{\star}$,   $ \hat\sigma$ and $\hat \rho:\T^{\star}\rightarrow\T^{\star}$,
are given by the following two identities:
$$\hat{\sigma}(s)=-\rho(-s),$$
$$\hat{\rho}(s)=-\sigma(-s),$$
for all $s\in\T^{\star}$.
\end{lemma}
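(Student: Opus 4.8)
The plan is to compute the dual jump operators directly from their defining infimum and supremum, reducing them to the original jump operators through the order-reversing change of variables $u \mapsto -u$. First I would write out $\hat\sigma(s)=\inf\{u\in\T^{\star}: u>s\}$. Since $u\in\T^{\star}$ is equivalent to $-u\in\T$, I substitute $u=-w$ with $w\in\T$; the constraint $u>s$ becomes $w<-s$, so the set $\{u\in\T^{\star}: u>s\}$ coincides with $\{-w : w\in\T,\ w<-s\}$. Applying the elementary identity $\inf(-A)=-\sup A$ then gives $\hat\sigma(s)=-\sup\{w\in\T: w<-s\}=-\rho(-s)$. The computation for $\hat\rho(s)=\sup\{u\in\T^{\star}: u<s\}$ is entirely symmetric, using $\sup(-A)=-\inf A$, and yields $\hat\rho(s)=-\inf\{w\in\T: w>-s\}=-\sigma(-s)$.

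The one point that requires care — and the only place where the argument is not a one-line substitution — is the treatment of the boundary conventions $\inf\emptyset:=\sup\T$ and $\sup\emptyset:=\inf\T$ fixed in Section~\ref{sec2}, which must be checked to transform correctly under duality. I would verify the degenerate case explicitly: if $\{u\in\T^{\star}: u>s\}$ is empty, then $s=\max\T^{\star}$, so $-s=\min\T$ and $\{w\in\T: w<-s\}$ is also empty; by convention $\rho(-s)=\inf\T$, whence $-\rho(-s)=-\inf\T=\sup\T^{\star}$, which is precisely the value assigned to $\hat\sigma(s)$ by the same convention applied on $\T^{\star}$. The symmetric check for $\hat\rho$ at the left endpoint $s=\min\T^{\star}$ is analogous. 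Once these endpoint cases are confirmed consistent, both identities hold for every $s\in\T^{\star}$.

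I expect no serious obstacle here: the substance of the lemma is the single observation that negation reverses the order on $\R$ and interchanges infima with suprema, so that ``forward'' jumps on $\T$ become ``backward'' jumps on $\T^{\star}$ and vice versa. The main thing to get right is the bookkeeping — keeping careful track of whether the sets defining the jumps are empty so that the $\inf\emptyset$/$\sup\emptyset$ conventions are applied consistently on both $\T$ and $\T^{\star}$ — rather than any genuine analytic difficulty.
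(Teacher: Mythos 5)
Your proposal is correct and follows essentially the same route as the paper: rewrite $\hat\sigma(s)$ as an infimum over $\T^{\star}$, substitute $u=-w$ to turn it into $-\sup\{w\in\T: w<-s\}=-\rho(-s)$, and argue symmetrically for $\hat\rho$. The explicit verification of the $\inf\emptyset$/$\sup\emptyset$ conventions at the endpoints is a welcome extra care that the paper's one-line computation leaves implicit, but it does not change the argument.
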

 \begin{proof} We show the first identity. Using the definition and some simple algebra,
 $$\hat{\sigma}(s)=\inf\{-w\in\T: -w<-s\}=-\sup\{v\in\T: v<-s\}=-\rho(-s).$$
 The second identity follows similarly.
    \end{proof}

\begin{lemma} Given a time scale $\T$, then 
$$(\T^{\kappa})^{\star}=(\T^{\star})_{\kappa}, \,\,\,\, \hbox{and}\,\,\,\, (\T_{\kappa})^{\star}=(\T^{\star})^{\kappa}.$$
\end{lemma}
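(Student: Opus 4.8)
The plan is to prove the first identity $(\T^{\kappa})^{\star}=(\T^{\star})_{\kappa}$ directly from the definitions, and then to deduce the second identity for free by a duality argument. The starting observation is that negation reverses the order on $\R$, so that $\sup\T^{\star}=-\inf\T$ and $\inf\T^{\star}=-\sup\T$; in particular $\sup\T<\infty$ if and only if $\inf\T^{\star}>-\infty$. I would therefore split the argument according to whether $\sup\T$ is finite, matching the case distinction built into the definition of $\T^{\kappa}$.

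The unbounded case is immediate. If $\sup\T=\infty$, then by definition $\T^{\kappa}=\T$, hence $(\T^{\kappa})^{\star}=\T^{\star}$; at the same time $\inf\T^{\star}=-\sup\T=-\infty$, so $(\T^{\star})_{\kappa}=\T^{\star}$, and the two sides coincide.

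For the bounded case $\sup\T<\infty$, I would unfold each side and compare. On the left, $s\in(\T^{\kappa})^{\star}$ means $-s\in\T$ together with $-s\notin(\rho(\sup\T),\sup\T]$. Negating the inequalities that define this excluded half-open interval (which flips their direction and swaps the open and closed endpoints) converts the condition $-s\notin(\rho(\sup\T),\sup\T]$ into $s\notin[-\sup\T,-\rho(\sup\T))$, so that $(\T^{\kappa})^{\star}=\T^{\star}\setminus[-\sup\T,-\rho(\sup\T))$. On the right, since $\inf\T^{\star}=-\sup\T>-\infty$, the definition gives $(\T^{\star})_{\kappa}=\T^{\star}\setminus[\inf\T^{\star},\hat\sigma(\inf\T^{\star}))$. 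Here I invoke Lemma~\ref{lem1}: because $\hat\sigma(s)=-\rho(-s)$, we get $\hat\sigma(\inf\T^{\star})=\hat\sigma(-\sup\T)=-\rho(\sup\T)$, while $\inf\T^{\star}=-\sup\T$, so the excluded interval is exactly $[-\sup\T,-\rho(\sup\T))$, which matches the left-hand side.

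Finally, rather than repeat the symmetric computation for the second identity, I would apply the first identity to the time scale $\T^{\star}$ in place of $\T$. Using $(\T^{\star})^{\star}=\T$, this yields $((\T^{\star})^{\kappa})^{\star}=((\T^{\star})^{\star})_{\kappa}=\T_{\kappa}$, and dualizing both sides gives $(\T^{\star})^{\kappa}=(\T_{\kappa})^{\star}$, as desired. The only genuinely delicate point in the whole argument is the endpoint bookkeeping when negating the half-open interval $(\rho(\sup\T),\sup\T]$: one must verify both that the open and closed ends swap correctly and that the quantity $-\rho(\sup\T)$ produced this way agrees with $\hat\sigma(\inf\T^{\star})$ supplied by Lemma~\ref{lem1}. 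Everything else reduces to the routine case analysis on finiteness of $\sup\T$ and $\inf\T$.
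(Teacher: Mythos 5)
Your proof is correct and follows essentially the same route as the paper's: the same observation that $\sup\T=-\inf\T^{\star}$, the same case split on finiteness of $\sup\T$, and the same negation of the excluded half-open interval, with your version simply making explicit the endpoint bookkeeping and the appeal to Lemma~\ref{lem1} that the paper leaves implicit. The only (minor, and rather clean) departure is that you obtain the second identity by applying the first to $\T^{\star}$ and using that $\star$ is an involution, where the paper just says ``similarly.''
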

  \begin{proof} We first observe that $ \sup\T=-\inf\T^{\star}$.
  
\noindent  If $\sup\T=\infty$, then
   $$(\T^{\kappa})^{\star}=(\T)^{\star}= (\T^{\star})_{\kappa}.$$ 
  If $\sup\T<\infty$, then 
  $$(\T^{\kappa})^{\star}=(\T\setminus (\rho(\sup \T),\sup\T])^{\star}=\T^{\star}\setminus (\rho(\sup \T),\sup\T])^{\star}=(\T^{\star})_{\kappa}.$$
  Similarly, $ (\T_{\kappa})^{\star}=(\T^{\star})^{\kappa}$.
  
    \end{proof}
\begin{lemma} \label{lem2} Given $ \mu:\T\rightarrow \R$, the forward graininess of $\T$, then the backwards graininess of $\T^{\star}$, $\hat \nu:\T^{\star}\rightarrow \R$, is given by the identity
$$\hat\nu (s)=\mu^{\star}(s)\,\,\,\hbox{for\,\,\, all}\,\,\, s\in\T^{\star}.$$
Also, given $ \nu:\T\rightarrow \R$, the backward graininess of $\T$, then the forward graininess of $\T^{\star}$, $\hat \mu:\T^{\star}\rightarrow \R$, is given by the identity
$$\hat\mu (s)=\nu^{\star}(s)\,\,\,\hbox{for\,\,\, all}\,\,\, s\in\T^{\star}.$$
\end{lemma}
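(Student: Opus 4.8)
The plan is to derive both identities by a direct unwinding of definitions, using Lemma~\ref{lem1} to convert the jump operators of $\T^{\star}$ back into those of $\T$. The only inputs I will need are the definitions of forward and backward graininess ($\mu(t)=\sigma(t)-t$ and $\nu(t)=t-\rho(t)$, applied on both $\T$ and $\T^{\star}$), the defining relation of the dual function $f^{\star}(s)=f(-s)$, and the two identities $\hat\sigma(s)=-\rho(-s)$, $\hat\rho(s)=-\sigma(-s)$ already established in Lemma~\ref{lem1}. There is no genuine obstacle here; the entire content is careful sign bookkeeping, so the ``hard part'' is merely to track the minus signs correctly through each substitution.

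For the first identity I would start from the definition of the backward graininess of the dual scale, namely $\hat\nu(s)=s-\hat\rho(s)$ for $s\in\T^{\star}$. Substituting $\hat\rho(s)=-\sigma(-s)$ from Lemma~\ref{lem1} gives $\hat\nu(s)=s+\sigma(-s)=\sigma(-s)-(-s)$. I would then recognize the right-hand side as exactly $\mu$ evaluated at the point $-s\in\T$, i.e. $\hat\nu(s)=\mu(-s)$, and finally rewrite this using the definition of the dual function as $\hat\nu(s)=\mu^{\star}(s)$, which is the claimed identity.

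The second identity proceeds symmetrically. Beginning from the definition of the forward graininess of the dual scale, $\hat\mu(s)=\hat\sigma(s)-s$, I would substitute $\hat\sigma(s)=-\rho(-s)$ from Lemma~\ref{lem1} to obtain $\hat\mu(s)=-\rho(-s)-s=(-s)-\rho(-s)$. This is precisely $\nu(-s)$, the backward graininess of $\T$ at the point $-s$, and hence equals $\nu^{\star}(s)$ by the definition of the dual function. Both computations are valid for every $s\in\T^{\star}$, since Lemma~\ref{lem1} holds for all such $s$, which completes the proof.
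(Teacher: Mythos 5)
Your proposal is correct and follows essentially the same route as the paper: both start from $\hat\nu(s)=s-\hat\rho(s)$, substitute $\hat\rho(s)=-\sigma(-s)$ from Lemma~\ref{lem1}, and recognize the result as $\mu^{\star}(s)$ (the paper treats the second identity as ``analogous'' where you write it out). No issues.
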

 \begin{proof} We prove the first identity, the second will follow analogously. Let $ s\in\T^{\star}$, then
 $$\hat\nu (s)=s-\hat\rho(s)=s+\sigma^{\star}(s)=\mu^{\star}(s).$$
 
    \end{proof}
\begin{lemma} \label{lemmacont}
Given $f:\T\rightarrow \R$, f is rd continuous (resp. ld continuous) if and only if its dual $f^{\star}:\T^{\star}\rightarrow \R$ is ld continuous (resp. rd continuous). 
\end{lemma}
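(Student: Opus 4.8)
The plan is to exploit the order-reversing homeomorphism $\phi:\T^{\star}\to\T$ given by $\phi(s)=-s$, under which $f^{\star}=f\circ\phi$. First I would record the involution $(\T^{\star})^{\star}=\T$ and $(f^{\star})^{\star}=f$, which both hold because $-(-s)=s$. This reduces the work: it suffices to prove the single implication ``$f$ rd-continuous $\Rightarrow f^{\star}$ ld-continuous,'' for an arbitrary time scale $\T$ and function $f:\T\to\R$, together with its left/right mirror ``$f$ ld-continuous $\Rightarrow f^{\star}$ rd-continuous.'' Indeed, applying these two statements to $g=f^{\star}$ on $\T^{\star}$ and using $g^{\star}=f$ yields the two converse implications, so the full biconditional and its parenthetical variant are recovered.

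Next I would translate the combinatorial data through $\phi$. Using Lemma~\ref{lem1}, a point $s\in\T^{\star}$ satisfies $\hat\sigma(s)=s$ iff $-\rho(-s)=s$, that is, iff $\rho(-s)=-s$; hence $s$ is right-dense in $\T^{\star}$ precisely when $-s$ is left-dense in $\T$. Symmetrically, $s$ is left-dense in $\T^{\star}$ precisely when $-s$ is right-dense in $\T$. Thus $\phi$ interchanges the roles of right- and left-density, which is exactly the rd/ld swap appearing in the statement.

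The analytic core is to transfer continuity and one-sided limits across $\phi$. Since $\phi$ is a homeomorphism for the subspace topologies inherited from $\R$, the function $f^{\star}$ is continuous at $s_{0}$ if and only if $f$ is continuous at $-s_{0}$. Moreover $\phi$ reverses orientation, so as $s\to s_{0}^{+}$ within $\T^{\star}$ one has $-s\to(-s_{0})^{-}$ within $\T$; consequently the right-sided limit of $f^{\star}$ at $s_{0}$ exists and is finite if and only if the left-sided limit of $f$ at $-s_{0}$ does, and the two values coincide (and dually for left limits). Assembling these pieces, ``$f^{\star}$ ld-continuous'' means $f^{\star}$ is continuous at every left-dense point of $\T^{\star}$ and has finite right-sided limits at every right-dense point of $\T^{\star}$; by the dictionary of the previous paragraph this says precisely that $f$ is continuous at every right-dense point of $\T$ and has finite left-sided limits at every left-dense point of $\T$, i.e.\ that $f$ is rd-continuous. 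This proves the chosen implication, and its left/right mirror is obtained by the identical argument with the orientation reversed.

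I expect the main obstacle to be the careful justification of the limit correspondence in the third paragraph, namely verifying that ``$s\to s_{0}^{+}$ in $\T^{\star}$'' matches ``$-s\to(-s_{0})^{-}$ in $\T$'' with existence and finiteness preserved, and checking the degenerate cases: isolated points, where neither definition imposes any condition on either side, and the extreme points $\inf\T^{\star}=-\sup\T$ and the like. These verifications are routine but must be done so that the density dictionary lines up exactly with the one-sided-limit clauses in the definitions of rd- and ld-continuity.
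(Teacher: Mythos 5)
Your proposal is correct and follows essentially the same route as the paper: both rest on the observation that negation is an order-reversing homeomorphism that swaps right-dense and left-dense points and exchanges left- and right-sided limits, from which the equivalence of rd-continuity of $f$ and ld-continuity of $f^{\star}$ follows directly. The only cosmetic difference is that you derive the converse implications from the involution $(f^{\star})^{\star}=f$, whereas the paper simply writes the argument as a chain of equivalences.
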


 \begin{proof}  We will only show the statement for rd continuous functions as the proof for ld continuous functions is analogous. We first observe that $t\in \T$ is a  right-dense point iff $-t\in\T^{\star}$ is a left-dense point. Also, $f:\T\rightarrow \R$ is continuous at $t$ iff $f^{\star}:\T^{\star}\rightarrow \R$  is continuous at $-t$.  Let $f:\T\rightarrow \R$ be a function, then, the following is true:

$f:\T\rightarrow \R$\,\,\, is rd continuous iff $f$  is continuous at the right-dense points and its left-sided limits exist (finite) at all left-dense points
 $\hbox {iff}$
 $f^{\star}$ is continuous at the left-dense points and its right-sided limits exist (finite) at all right-dense points
 $\hbox {iff}$
$f^{\star}:\T^{\star}\rightarrow \R$ is ld continuous.
    \end{proof}

The next lemma  links {\it delta} derivatives to {\it nabla} derivatives, showing that the two fundamental concepts of the two types of calculus are, in a certain sense, the dual of each other. In fact, this is the key lemma for our main results. 
\begin{lemma} \label{lemmaderi}Let $f:\T\rightarrow \R$ be {\it delta} (resp.~{\it nabla}) differentiable at $t_0\in\T^{\kappa}$ (resp. at $t_0\in\T_{\kappa}$), then $f^{\star}:\T^{\star}\rightarrow \R$ is {\it nabla} (resp.~{\it delta}) differentiable at $-t_0\in(\T^{\star})_{\kappa}$ (resp. at $-t_0\in(\T^{\star})^{\kappa}$), and the following identities hold true
 $$f^{\Delta}(t_0)=-(f^{\star})^{\hat\nabla}(-t_0)\,\,\,\, (\hbox{resp.}\,\,\,\,  f^{\nabla}(t_0)=-(f^{\star})^{\hat\Delta}(-t_0)),$$
or,
 $$f^{\Delta}(t_0)=-((f^{\star})^{\hat\nabla})^{\star}(t_0)\,\,\,\, (\hbox{resp.}\,\,\,\,  f^{\nabla}(t_0)=-((f^{\star})^{\hat\Delta})^{\star}(t_0)),$$
 or,
  $$(f^{\Delta})^{\star}(-t_0)=-((f^{\star})^{\hat\nabla})(-t_0)\,\,\,\, (\hbox{resp.}\,\,\,\, ( f^{\nabla})^{\star}(-t_0)=-(f^{\star})^{\hat\Delta}(-t_0)),$$
 where $\Delta$, $\nabla$ denote the derivatives for the time scale $\T$ and  $\hat\Delta$, $\hat\nabla$  denote the derivatives for the time scale $\T^{\star}$. 
 \end{lemma}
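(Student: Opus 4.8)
The plan is to prove the first (delta-to-nabla) identity straight from the $\epsilon$-definitions of the two derivatives; the parenthetical nabla-to-delta statement then follows by the identical argument with the roles of $\sigma$ and $\rho$ (and of $\hat\sigma$, $\hat\rho$) interchanged, and the remaining two displayed identities are immediate notational rewrites of the first via the defining relation $g^{\star}(t)=g(-t)$ for dual functions. Indeed, $((f^{\star})^{\hat\nabla})^{\star}(t_0)=(f^{\star})^{\hat\nabla}(-t_0)$ and $(f^{\Delta})^{\star}(-t_0)=f^{\Delta}(t_0)$, so once the first line is established the other two say nothing new. Before starting I would record the membership statement: since $t_0\in\T^{\kappa}$, the previously-proved identity $(\T^{\kappa})^{\star}=(\T^{\star})_{\kappa}$ gives $-t_0\in(\T^{\star})_{\kappa}$, so that nabla differentiability of $f^{\star}$ at $-t_0$ is meaningful.

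Set $\alpha:=f^{\Delta}(t_0)$ and propose $\beta:=-\alpha$ as the candidate value of $(f^{\star})^{\hat\nabla}(-t_0)$. Fix $\epsilon>0$. Delta differentiability of $f$ furnishes a neighborhood $U$ of $t_0$ with $|f(\sigma(t_0))-f(s)-\alpha(\sigma(t_0)-s)|<\epsilon|\sigma(t_0)-s|$ for all $s\in U$. I would then perform the change of variable $s=-r$, which is a homeomorphism carrying $U$ onto the neighborhood $V:=\{-s:s\in U\}$ of $-t_0$. Using Lemma~\ref{lem1} in the form $\hat\rho(-t_0)=-\sigma(t_0)$, together with $f^{\star}(\hat\rho(-t_0))=f^{\star}(-\sigma(t_0))=f(\sigma(t_0))$ and $f^{\star}(r)=f(-r)=f(s)$, the ingredients of the nabla inequality at $-t_0$ transform as $\hat\rho(-t_0)-r=-\sigma(t_0)+s=-(\sigma(t_0)-s)$ and $f^{\star}(\hat\rho(-t_0))-f^{\star}(r)=f(\sigma(t_0))-f(s)$.

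Substituting $\beta=-\alpha$, the target nabla inequality $|f^{\star}(\hat\rho(-t_0))-f^{\star}(r)-\beta(\hat\rho(-t_0)-r)|<\epsilon|\hat\rho(-t_0)-r|$ becomes, term by term, exactly $|f(\sigma(t_0))-f(s)-\alpha(\sigma(t_0)-s)|<\epsilon|\sigma(t_0)-s|$, which holds for every $s\in U$, hence for every $r\in V$. Therefore $f^{\star}$ is nabla differentiable at $-t_0$ with $(f^{\star})^{\hat\nabla}(-t_0)=\beta=-f^{\Delta}(t_0)$, which is the first identity. The only delicate point I anticipate is the bookkeeping of the two sign reversals — the one coming from the dual argument $s\mapsto -s$ and the one built into replacing $\sigma$ by $\hat\rho$ through Lemma~\ref{lem1} — and verifying that they combine so as to leave the absolute-value inequality invariant while flipping the sign of the derivative; once the substitution $s=-r$ is fixed, this reduces to a routine matching of the two displayed inequalities.
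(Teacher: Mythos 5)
Your proof is correct and follows essentially the same route as the paper's: both start from the $\epsilon$-definition of the delta derivative, substitute $s=-r$, invoke Lemma~\ref{lem1} in the form $\hat\rho(-t_0)=-\sigma(t_0)$, and track the two sign reversals to read off $(f^{\star})^{\hat\nabla}(-t_0)=-f^{\Delta}(t_0)$. Your explicit remarks on the membership $-t_0\in(\T^{\star})_{\kappa}$ and on the second and third identities being mere notational rewrites are correct and, if anything, slightly more complete than the paper's write-up.
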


   \begin{proof}
The proof is trivial but for the sake of completeness we will write all the details.~

We will prove that if $f:\T\rightarrow \R$ is {\it delta} differentiable at $t_0\in\T^{\kappa}$, then $f^{\star}$ is {\it nabla} differentiable at $-t_0\in(\T^{\star})_{\kappa}$.
  Let $f:\T\rightarrow \R$ be {\it delta} differentiable at $t_0\in T^{\kappa}$. Then
   for all $\epsilon>0$ there exists $U$ a neighborhood of $t_0$ such that the inequality
$$|f(\sigma(t_0))-f(s)-f^{\Delta}(t_0)(\sigma(t_0)-s)|  < \epsilon |\sigma(t_0)-s|,$$
is true for all $s\in U$. Next, using Lemma~\ref{lem1}, as well as the definition of dual function $f^{\star}$, we rewrite the above inequality as
$$|f(-\hat\rho(-t_0))-f^{\star}(-s)-f^{\Delta}(t_0)(-\hat\rho(-t_0)-s)|  < \epsilon |-\hat\rho(-t_0)-s|,$$
for all $s\in U$. Let $U^{\star}$ be the dual of $U$. Let $t\in U^{\star}$, then $-t\in U$. Hence, by replacing $s$ by $-t$, we obtain
  $$|(f^{\star}(\hat\rho(-t_0))-f^{\star}(t)-f^{\Delta}(t_0)(-\hat\rho(-t_0)+t)|  < \epsilon |-\hat\rho(-t_0)+t|,$$
    $$|f^{\star}(\hat\rho(-t_0))-f^{\star}(t)-(-f^{\Delta}(t_0))(\hat\rho(-t_0)-t)|  < \epsilon |\hat\rho(-t_0)-t|.$$
   By definition,  this implies that the function $f^{\star}$ is {\it nabla} differentiable at $-t_0$, and 
   $$(f^{\star})^{\hat\nabla}(-t_0)=-f^{\Delta}(t_0).$$
   Analogously, it follows that, if $f:\T\rightarrow \R$ is {\it nabla} differentiable at $t_0\in\T_{\kappa}$, then $f^{\star}:\T^{\star}\rightarrow \R$ is {\it delta} differentiable at $-t_0\in(\T^{\star})^{\kappa}$, and
     $$(f^{\star})^{\hat\Delta}(-t_0)=-f^{\nabla}(t_0).$$
  \end{proof}
  
  The next two lemmas link the notions of $C^1_{rd}$ and $C^1_{ld}$ functions.
  \begin{lemma} \label{lemmacontder}
Given a function $f:\T\rightarrow \R$, $f$ belongs to  $C^1_{rd}$ (resp. $C^1_{ld}$) if and only if its dual $f^{\star}:\T^{\star}\rightarrow \R$ belongs to $C^1_{ld}$ (resp. $C^1_{rd}$) .
\end{lemma}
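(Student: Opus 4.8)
The plan is to combine Lemma~\ref{lemmaderi}, which dualizes the differentiation operators, with Lemma~\ref{lemmacont}, which dualizes the continuity classes, and to exploit that the star operation is an involution. I will prove the equivalence $f\in C^1_{rd}\Leftrightarrow f^\star\in C^1_{ld}$; the parenthetical statement will then follow by symmetry.

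First I would unwind the two definitions. By definition $f\in C^1_{rd}$ means precisely that $f^\Delta(t)$ exists for every $t\in\T^{\kappa}$ and that $f^\Delta\in C_{rd}$, while $f^\star\in C^1_{ld}$ means that $(f^\star)^{\hat\nabla}(s)$ exists for every $s\in(\T^\star)_{\kappa}$ and that $(f^\star)^{\hat\nabla}\in C_{ld}$. The first step is to match up the domains: by the lemma giving $(\T^{\kappa})^\star=(\T^\star)_{\kappa}$, the point $t$ ranges over $\T^{\kappa}$ exactly when $s=-t$ ranges over $(\T^\star)_{\kappa}$. Hence, once the pointwise differentiability correspondence is in hand, ``$f^\Delta$ exists on all of $\T^{\kappa}$'' and ``$(f^\star)^{\hat\nabla}$ exists on all of $(\T^\star)_{\kappa}$'' are equivalent statements.

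Next I would apply Lemma~\ref{lemmaderi} pointwise: $f$ is {\it delta} differentiable at each $t\in\T^{\kappa}$ if and only if $f^\star$ is {\it nabla} differentiable at each $-t\in(\T^\star)_{\kappa}$, and in that case $(f^\star)^{\hat\nabla}(-t)=-f^\Delta(t)$. Reading this as an identity of functions on $\T^\star$ gives $(f^\star)^{\hat\nabla}=-(f^\Delta)^\star$. It then remains to transfer the continuity class. By Lemma~\ref{lemmacont}, $f^\Delta\in C_{rd}$ if and only if $(f^\Delta)^\star\in C_{ld}$; since multiplication by the scalar $-1$ preserves continuity and the existence of one-sided limits, it preserves membership in $C_{ld}$ as well, so $(f^\star)^{\hat\nabla}=-(f^\Delta)^\star\in C_{ld}$ if and only if $f^\Delta\in C_{rd}$. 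Chaining these equivalences yields $f\in C^1_{rd}\Leftrightarrow f^\star\in C^1_{ld}$.

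Finally, for the ``resp.'' statement I would invoke the involution $(\T^\star)^\star=\T$ and $(f^\star)^\star=f$: applying the equivalence just established to $g:=f^\star$ in place of $f$ gives $f^\star\in C^1_{rd}\Leftrightarrow (f^\star)^\star=f\in C^1_{ld}$, which is exactly the parenthetical claim. The only point requiring care is the bookkeeping in the second paragraph, namely ensuring that Lemma~\ref{lemmaderi} is applied on the correct dual $(\T^{\kappa})^\star$ of the restricted time scale rather than on $\T^\star$ itself; this is settled by the lemma identifying $(\T^{\kappa})^\star$ with $(\T^\star)_{\kappa}$. Beyond this I do not expect any genuine obstacle, since the entire content of the lemma is carried by Lemmas~\ref{lemmaderi} and~\ref{lemmacont} together with the involutivity of the star operation.
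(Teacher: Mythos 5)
Your argument is correct. The paper actually states Lemma~\ref{lemmacontder} without any proof, treating it as an immediate consequence of the preceding material; your write-up supplies exactly the intended argument, namely the identification $(\T^{\kappa})^{\star}=(\T^{\star})_{\kappa}$, the pointwise correspondence $(f^{\star})^{\hat\nabla}=-(f^{\Delta})^{\star}$ from Lemma~\ref{lemmaderi}, the transfer of the continuity class via Lemma~\ref{lemmacont}, and the involutivity of $\star$ for the ``resp.''\ half, so there is nothing to correct.
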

  \begin{lemma} \label{lemmacontder1}
Given a function $f:\T\rightarrow \R$, $f$ belongs to  $C^1_{prd}$ (resp. $C^1_{pld}$) if and only if its dual $f^{\star}:\T^{\star}\rightarrow \R$ belongs to $C^1_{pld}$ (resp. $C^1_{prd}$) .
\end{lemma}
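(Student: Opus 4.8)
The plan is to prove this exactly as one would prove the preceding Lemma~\ref{lemmacontder}, the only new ingredient being that the ``piecewise'' structure is preserved under duality. Recall that $f\in C^1_{prd}$ means that $f$ is continuous, is {\it delta} differentiable at all but finitely many points of $\T^{\kappa}$, and that $f^{\Delta}$ is piecewise rd-continuous (regulated, and rd-continuous off a finite set); analogously $C^1_{pld}$ is the piecewise ld-continuously {\it nabla} differentiable class. The proof will combine Lemma~\ref{lemmaderi} (the pointwise duality of {\it delta} and {\it nabla} derivatives) with a piecewise version of Lemma~\ref{lemmacont} (the duality of rd- and ld-continuity).

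First I would record the piecewise analog of Lemma~\ref{lemmacont}: a function $g:\T\rightarrow\R$ is piecewise rd-continuous if and only if $g^{\star}:\T^{\star}\rightarrow\R$ is piecewise ld-continuous. This is immediate from the fact, already observed in the proof of Lemma~\ref{lemmacont}, that the reflection $t\mapsto -t$ is a bijection $\T\rightarrow\T^{\star}$ that sends right-dense points to left-dense points and preserves continuity at corresponding points; since a bijection carries any finite exceptional set to a finite exceptional set, regulatedness and continuity off a finite set transfer directly. Next I would invoke Lemma~\ref{lemmaderi}: if $f$ is {\it delta} differentiable at $t_0$, then $f^{\star}$ is {\it nabla} differentiable at $-t_0$ with $(f^{\star})^{\hat\nabla}(-t_0)=-f^{\Delta}(t_0)$, which written as functions on $\T^{\star}$ says precisely
$$(f^{\star})^{\hat\nabla}=-(f^{\Delta})^{\star}.$$
In particular $f^{\star}$ fails to be {\it nabla} differentiable exactly at the duals of the points where $f$ fails to be {\it delta} differentiable, so the two finite exceptional sets correspond under reflection.

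Assembling these, the forward direction runs as follows. Suppose $f\in C^1_{prd}$. By Lemma~\ref{lemmacont} the continuity of $f$ is equivalent to that of $f^{\star}$. Since multiplication by $-1$ does not affect any continuity property, the piecewise analog above applied to $g=f^{\Delta}$ gives that $f^{\Delta}$ is piecewise rd-continuous iff $(f^{\Delta})^{\star}=-(f^{\star})^{\hat\nabla}$ is piecewise ld-continuous, i.e.\ iff $(f^{\star})^{\hat\nabla}$ is piecewise ld-continuous. Together with the correspondence of exceptional sets from Lemma~\ref{lemmaderi}, this yields $f^{\star}\in C^1_{pld}$. The converse and the ``resp.'' statement follow by the symmetric application of the {\it nabla}-to-{\it delta} half of Lemma~\ref{lemmaderi}. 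The only point requiring care---and hence the main (though mild) obstacle---is the bookkeeping of the finitely many exceptional points: one must check that differentiability failures and continuity failures of $f$ and $f^{\Delta}$ map, under $t\mapsto -t$, to the corresponding failures of $f^{\star}$ and $(f^{\star})^{\hat\nabla}$, and that finiteness is preserved. Because the reflection is a bijection swapping right-dense and left-dense points, this is routine, and the bulk of the lemma reduces to the already-established pointwise dualities.
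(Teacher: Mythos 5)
The paper states Lemma~\ref{lemmacontder1} without giving any proof, so there is nothing to diverge from; your argument---combining the pointwise derivative duality $(f^{\star})^{\hat\nabla}=-(f^{\Delta})^{\star}$ from Lemma~\ref{lemmaderi} with a piecewise analogue of Lemma~\ref{lemmacont}, and observing that the reflection $t\mapsto -t$ is a bijection swapping right-dense and left-dense points and carrying finite exceptional sets to finite exceptional sets---is correct and is precisely the justification the paper implicitly relies on. No gaps.
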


 In the following example we derive a well known formulas for derivatives. We will deduce the formula for the {\it nabla} derivative using the one for the {\it delta} derivative.
 \begin{example}{(Formula for derivatives.)}
\end{example}
 It is well known~(see~\cite{Boh}) that if $f$ is {\it delta} differentiable on $\T$, with $\mu$ the associated forward graininess, then the formula holds
\begin{equation}\label{derform1}
f^{\sigma}(t)=f(t)+\mu(t) f^{\Delta}(t)\,\,\,\hbox{for\,\,\, all}\,\,\, t\in\T^{\kappa},
\end{equation}
where $f^{\sigma}=f \circ \sigma $.

We will use it to derive the analogous formula for the {\it nabla} derivative. Suppose that $h$ is {\it nabla} differentiable on $\T$ , with $\nu$ its associated  backward graininess, then its dual function $h^{\star}$ is {\it delta} differentiable on $\T^{\star}$. Hence, we apply~(\ref{derform1}) to $h^{\star}$: 

\begin{equation}
( h^{\star})^{\hat \sigma}(s)=h^{\star}(s)+\hat \mu(s) (h^{\star})^{\hat\Delta}(s)\,\,\,\hbox{for\,\,\, all}\,\,\, s\in(\T^{\star})^{\kappa}.
\end{equation}
We observe that $\hat \mu=\nu^{\star}$, while $( h^{\star})^{\hat\sigma}= h^{\rho}$ by Lemma~\ref{lem1}, and Lemma~\ref{lem2}, with $h^{\rho}=h\circ \rho$, and $(h^{\star})^{\hat\Delta}=-h^{\nabla}$ by Lemma~\ref{lemmaderi}. So,

\begin{equation}\label{derform2}
h^{\rho}(t)=h(t)-\nu(t) h^{\nabla}(t)\,\,\hbox{for\,\,\, all}\,\,\, t\in\T_{\kappa}.
\end{equation}
We recall that this formula~(\ref{derform2}) has appeared in the {\it nabla} context in~\cite{ABL}.

Next, using Lemma~\ref{lemmacont} and Lemma~\ref{lemmaderi}, we show in the following proposition how to compare {\it nabla} and {\it delta} integrals.

  \begin{proposition} \label{integral}
(i) Let $f:[a,b]\rightarrow \R$ be a rd continuous, then the following two integrals are equal
  $$\int_a^b f(t)\Delta t=\int_{-b}^{-a}f^{\star}(s)\hat\nabla s;$$
(ii) Let $f:[a,b]\rightarrow \R$ be a ld continuous, then the following two integrals are equal
  $$\int_a^b f(t)\nabla t=\int_{-b}^{-a}f^{\star}(s)\hat\Delta s.$$
   \end{proposition}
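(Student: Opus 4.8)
The plan is to prove part (i) through antiderivatives and to deduce (ii) by an entirely symmetric argument. First I would invoke the standard existence result of the delta calculus (see~\cite{BohPet}): since $f$ is rd-continuous on $[a,b]$, it admits a delta antiderivative, i.e.\ a function $F$ with $F^{\Delta}=f$ on $[a,b]^{\kappa}$, and the Fundamental Theorem then computes the delta integral as $\int_a^b f(t)\Delta t = F(b)-F(a)$.

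Next I would pass to the dual. By Lemma~\ref{lemmaderi}, because $F$ is delta differentiable its dual $F^{\star}$ is nabla differentiable on $([a,b]^{\kappa})^{\star}=([-b,-a])_{\kappa}$, with $(F^{\star})^{\hat\nabla}(s)=-F^{\Delta}(-s)$ for every such $s$. Since $F^{\Delta}(-s)=f(-s)=f^{\star}(s)$, this says precisely that $-F^{\star}$ is a nabla antiderivative of $f^{\star}$ on $[-b,-a]$. The required regularity is consistent: by Lemma~\ref{lemmacont} the hypothesis that $f$ is rd-continuous is equivalent to $f^{\star}$ being ld-continuous, which is exactly the condition guaranteeing that the nabla integral $\int_{-b}^{-a}f^{\star}(s)\hat\nabla s$ is defined and equals the difference of $-F^{\star}$ at its endpoints.

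Then I would simply evaluate. Using the nabla Fundamental Theorem with the antiderivative $-F^{\star}$, one gets $\int_{-b}^{-a}f^{\star}(s)\hat\nabla s = (-F^{\star})(-a)-(-F^{\star})(-b) = F^{\star}(-b)-F^{\star}(-a)$. By the definition of the dual function, $F^{\star}(-a)=F(a)$ and $F^{\star}(-b)=F(b)$, so the right-hand side is $F(b)-F(a)$, which coincides with $\int_a^b f(t)\Delta t$ from the first step. This proves (i). For (ii) I would run the same argument with the roles of $\Delta$ and $\nabla$ interchanged: an ld-continuous $f$ has a nabla antiderivative $G$, its dual $-G^{\star}$ is a delta antiderivative of $f^{\star}$ on $[-b,-a]$ by the second half of Lemma~\ref{lemmaderi}, and evaluating the delta integral reproduces $G(b)-G(a)=\int_a^b f(t)\nabla t$.

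I do not expect a serious obstacle here; the substance is already packaged in Lemma~\ref{lemmaderi} together with the matching-continuity Lemma~\ref{lemmacont}. The only place demanding care is the bookkeeping of signs and endpoints: the orientation reversal sends $[a,b]$ to $[-b,-a]$, Lemma~\ref{lemmaderi} contributes a minus sign, and these two effects must be tracked together so that the endpoint values of $F^{\star}$ are paired correctly. A cleaner-looking but longer alternative would bypass antiderivatives and compare Riemann sums directly, but the antiderivative route is shorter precisely because it reuses the derivative duality already established.
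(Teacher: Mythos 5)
Your proposal is correct and follows essentially the same route as the paper: compute both integrals via antiderivatives and use Lemma~\ref{lemmaderi} to identify $-F^{\star}$ as a nabla antiderivative of $f^{\star}$, so the endpoint values match up. The only cosmetic difference is in part (ii), where the paper simply applies part (i) to $f^{\star}$ and uses $(f^{\star})^{\star}=f$, whereas you rerun the symmetric argument from scratch; both are fine.
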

  \begin{proof}
Proof of $(i)$. By definition of integral, 
  $$\int_a^b f(t)\Delta t= F(b)-F(a),\,\,\,\,\hbox{where}\,\,\, F$$
   is an antiderivative of $f$,\,i.e.,\,\,\,\,\,
 $$ F^{\Delta}(t)=f(t).$$
 We have seen in Lemma~\ref{lemmaderi} that : $f^{\star}(s)=(F^{\Delta})^{\star}(s)=-(F^{\star})^{\hat\nabla}(s)$. Also, again by definition,
 $$ \int_{-b}^{-a}f^{\star}(s)\hat\nabla s=G(-a)-G(-b),\,\,\,\,\hbox{where $G$}$$
  is an antiderivative of $f^{\star}$, i.e., 
  $$G^{\nabla}(s)=f^{\star}(s).$$
 
It follows that: $G=-F^{\star}+c$, where $c\in\R$, and
 $$ \int_{-b}^{-a}f^{\star}(s)\hat\nabla s=-F^{\star}(-a)+F^{\star}(-b)=-F(a)+F(b)=\int_a^b f(t)\Delta t.$$
Proof $(ii)$.  We apply $(i)$ to $f^{\star}$,
$$\int_{-b}^{-a} f^{\star}(s)\hat\nabla s=\int_{a}^{b}(f^{\star})^{\star}(t)\nabla t.$$
Since $(f^{\star})^{\star}=f$, $(ii)$ follows immediately.
 \end{proof}

\section{Main Result}
The main result of this article will be the following Duality Principle which asserts that given certain results in the {\it nabla} (resp. {\it delta}) calculus under certain hypotheses, one can obtain the dual results by considering the corresponding dual hypotheses and the dual conclusions in the {\it delta} (resp. {\it nabla}) setting.

Given a statement in the~{\it delta} calculus  (resp.~{\it nabla} calculus), the corresponding dual statement is obtained by replacing any object in the given statement by the corresponding dual one.

\noindent{\bf Duality Principle} 
{\it For any statement true in the {\it nabla} (resp. {\it delta}) calculus in the time scale $\T$ there is an equivalent dual statement in the {\it delta} (resp.{\it nabla}) calculus for the dual time scale $\T^{\star}$. }

In the next example we further illustrate how the Duality Principle applies.

\begin{example}{(Integration by parts.)}
\end{example}
 We show how the Duality Principle can be applied to prove the integration by parts formula.
In {\it delta} settings the integration by parts formula is given by the following identity:
\begin{equation}\label{intparts}
\int_a^b f(t)g^{\Delta}(t)\Delta t=f(b)g(b)-f(a)g(a)-\int_a^b f^{\Delta}(t)g^{\bar \sigma}(t)\Delta t,
\end{equation}
for all functions $f,g:[a,b]\rightarrow \R$, with $f,g\in C^1_{rd}$. 

Now, let $h,j:[a,b]\rightarrow \R$, with $h,j\in C^1_{ld}$, then, the dual functions $h^{\star}$, $j^{\star}:[-b,-a]\rightarrow \R$ are in $C^1_{rd}$. 

Next, we will apply the identity~(\ref{intparts}) to  $h^{\star}$ and $j^{\star}$:

$$ \int_{-b}^{-a} h^{\star}(t) (j^{\star})^{\hat\Delta}(t)\hat\Delta t=
 h^{\star}(-a)j^{\star}(-a)-h^{\star}(-b)j^{\star}(-b)- \int_{-b}^{-a} (h^{\star})^{\hat\Delta}(t) (j^{\star})^{\sigma}(t)\hat\Delta t.
$$
The LHS of the last identity can be written as:
\begin{equation}\label{intpartsa}
\int_{-b}^{-a} h^{\star}(t) (j^{\star})^{\hat\Delta}(t)\hat\Delta t=-\int_{-b}^{-a}(h\,j^{\nabla})^{\star}(t)\hat\Delta t=-\int_a^b h(s)\, j^{\nabla} (s)\nabla s,
 \end{equation}
because $(h\,j^{\nabla})^{\star}(t)=h^{\star}(t) (j^{\nabla})^{\star}(t)= -h^{\star}(t) (j^{\star})^{\hat\Delta}(t)$.

The second term in the RHS can be written as:

\begin{equation}\label{intpartsb}
 \int_{-b}^{-a} (h^{\star})^{\hat\Delta}(t) (j^{\star})^{\hat \sigma}(t)\Delta t=\int_a^b ( (h^{\star})^{\hat\Delta}(j^{\star})^{\hat \sigma})^{\star}(s)\nabla s= -\int_a^b h^{\nabla}(s) j^{\rho}(s)\nabla s,
 \end{equation}
because of the identity $((j^{\star})^{\hat\sigma})^{\star}(s)=j^{\rho}(s)$. 

To obtain the desired formula we substitute the RHS of~(\ref{intpartsb}) in the integration by parts formula~(\ref{intparts}):
\begin{equation}\label{intpartsna}
\int_a^b h(s)\, j^{\nabla} (s)\nabla(s) =-h(a)j(a)+h(b)j(b)-\int_a^b h^{\nabla}(s) j^{\rho}(s)\nabla s.
\end{equation}
It follows that the  identity~(\ref{intpartsna}) is the integration by parts formula for the {\it nabla} setting.

\section{Application of the Duality Principle to the calculus of variations on time scales}

\subsection{ Euler-Lagrange equation}

We consider the Euler-Lagrange equation using the identity of Proposition~\ref{integral}.~We will use Bohner's results in~\cite{Boh} in the {\it delta} settings to prove similar results in the {\it nabla} settings as done in~\cite{ABL} (one could also do the vice versa). We review a few definitions.  

\begin{definition}
   A function $f:[a,b]\rightarrow \R$ belongs to the space $C^1_{rd}$ if the following norm is finite: $||f||_{C^1_{rd}}=||f||_{0,r}+\max_{t\in [a,b]^{\kappa}} |f^{\Delta}(t)|$, where $||f||_{0,r}=\max_{t\in [a,b]^{\kappa}} |f^{\sigma}(t)|$; also,
 a function $f:[a,b]\rightarrow \R$ belongs to the space $C^1_{ld}$ if the following norm is finite: $||f||_{C^1_{ld}}= ||f||_{0,l}+\max_{t\in [a,b]_{\kappa}} |f^{\nabla}(t)|$, where $||f||_{0,l}=\max_{t\in [a,b]_{\kappa}} |f^{\rho}(t)|$.
\end{definition}

\begin{definition}
   A function $f$ is {\it delta} regulated if the right-hand limit $f (t +)$ exists (finite) at all right-dense points $t\in \T$  and the left-hand limit $f (t -)$ exists at all left-dense points  $t\in \T$; 
$f$ is regulated if the left-hand limit $f (t +)$ exists (finite) at all left-dense points $t\in \T$  and the right-hand limit $f (t -)$ exists at all right-dense points  $t\in \T$.\end{definition}

\begin{definition}
  A function $f$ is {\it delta} piecewise rd-continuous (we write $f \in C_{prd}$ ) if it is regulated and if it is rd continuous at all,  except possibly at Þnitely many, right-dense points  $t\in \T$; $f$ is {\it nabla} piecewise ld-continuous (we write $f \in C_{pld}$ ) if it is {\it nabla} regulated and if it is ld continuous at all,  except possibly at Þnitely many, left-dense points  $t\in \T$.\end{definition}
  
\begin{definition}
   $f$ is {\it delta} piecewise rd-continuously differentiable (we write $f \in C^1_{prd}$ ) if $f$ is rd continuous and $f^{\Delta}\in C_{prd}$;
 $f$ is {\it delta} piecewise ld-continuously differentiable (we write $f \in C^1_{pld}$ ) if $f$ is ld continuous and $f^{\nabla}\in C_{pld}$.\end{definition}

\begin{definition}
   Assume the function $L:\T\times \R\times\R\rightarrow \R$  is of class $C^2$ in the second and third variable, and $rd$ continuous in the first variable. Then, $y_0$ is said to be a weak (resp. strong) local minimum of the problem
\begin{equation}\label{variprob1}
\mathcal L(y)= \int_a^b L(t, y^{\sigma}(t), y^{\Delta}(t))\Delta t\,\,\,\,\,\, y(a)=\alpha, \,\,\, y(b)=\beta,
\end{equation}
where $a$, $b\in\T$, with $a<b$; $\alpha$, $ \beta\in\R$, and $L:\T\times \R\times\R\rightarrow \R$,

if $y_0(a)=\alpha, \,\, y_0(b)=\beta,$ and $\mathcal L(y_0)\leq \mathcal L(y)$ for all $y\in C^{1}_{rd}$ with  $y(a)=\alpha, \,\,\, y(b)=\beta$  and $||y-y_0||_{C^1_{rd}}\leq \delta$ (resp.  $||y-y_0||_{0,r}\leq \delta$) for some $\delta>0$.
 \end{definition}
  
 We refer to the function $L$ as to the Lagrangian for the above problem.
 Moreover, if $L=L(t,x,v)$, then $L_v$, $L_x$ represent, respectively, the partial derivatives of $L$ with respect to $v$, and $x$.

\begin{definition}
   Assume the function $\bar L:\T\times \R\times\R\rightarrow \R$  is of class $C^2$ in the second and third variable, and $rd$ continuous in the first variable. Then, $y_0$ is said to be a weak (strong) local minimum of the problem
\begin{equation}\label{variprob2}
\bar {\mathcal L}(h)= \int_{c}^{d} \bar L(s, h^{\rho}(s), h^{\nabla}(s))\nabla s\,\,\,\,\,\, h(c)=A, \,\,\, h(d)=B,
\end{equation}
where $c$, $d\in\T$, with $c<d$; $A$, $ B\in\R$, and $\bar L:\T\times \R\times\R\rightarrow \R$,

if $y_0(c)=A, \,\, y_0(d)=B,$ and $\bar{\mathcal L}(y_0)\leq\bar{ \mathcal L}(y)$ for all $y\in C^{1}_{ld}$ with  $y(c)=A, \,\,\, y(d)=B$  and $||y-y_0||_{C^1_{ld}} \leq \delta$ (resp.  $||y-y_0||_{0,l} \leq \delta$)  for some $\delta>0$.
 \end{definition}
  
 \begin{definition}
   Given a Lagrangian  $L:\T\times \R\times\R\rightarrow \R$, we define the dual (corresponding) Lagrangian $L^{\star}:\T^{\star}\times \R\times\R\rightarrow \R$ by 
   $L^{\star}(s,x,v)=L(-s,x,-v)$ for all $(s,x,v)\in\T^{\star}\times\R\times\R$. 
\end{definition}
  
As a consequence of the definition of the dual Lagrangian and Proposition~\ref{integral} we have the following useful lemma:
\begin{lemma}
\label{corlag} Given a Lagrangian $L:[a,b]\times \R\times\R\rightarrow \R$,  then the following identity holds:
$$\int_a^b L(t, y^{\sigma}(t), y^{\Delta}(t))\Delta t= \int_{-b}^{-a} L^{\star}(s, (y^{\star})^{\hat \rho}(s), (y^{\star})^{\hat\nabla}(s))\hat\nabla s,$$
for all functions $y\in C^1_{rd} ([a,b])$.

\end{lemma}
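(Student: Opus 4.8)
The plan is to reduce the claimed identity to a single application of Proposition~\ref{integral}(i), by recognizing that the integrand on the right-hand side is precisely the dual of the integrand on the left. First I would unwind the two dual objects appearing on the right. Using Lemma~\ref{lem1}, which gives $\hat\rho(s)=-\sigma(-s)$, together with the definition of the dual function, I compute
$$(y^{\star})^{\hat\rho}(s)=y^{\star}(\hat\rho(s))=y^{\star}(-\sigma(-s))=y(\sigma(-s))=y^{\sigma}(-s).$$
Next, by Lemma~\ref{lemmaderi} applied to $y$ (which is {\it delta} differentiable since $y\in C^1_{rd}$), I have $(y^{\star})^{\hat\nabla}(-t_0)=-y^{\Delta}(t_0)$, so substituting $s=-t_0$ yields
$$(y^{\star})^{\hat\nabla}(s)=-y^{\Delta}(-s).$$

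Second, I would substitute these into $L^{\star}$ and use the definition of the dual Lagrangian, $L^{\star}(s,x,v)=L(-s,x,-v)$, observing that the two minus signs cancel:
$$L^{\star}\big(s,(y^{\star})^{\hat\rho}(s),(y^{\star})^{\hat\nabla}(s)\big)=L^{\star}\big(s,y^{\sigma}(-s),-y^{\Delta}(-s)\big)=L\big(-s,y^{\sigma}(-s),y^{\Delta}(-s)\big).$$
Setting $g(t):=L(t,y^{\sigma}(t),y^{\Delta}(t))$, the right-hand expression is exactly $g(-s)=g^{\star}(s)$. Thus the right-hand side of the claimed identity equals $\int_{-b}^{-a}g^{\star}(s)\hat\nabla s$.

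Finally, I would verify that $g$ is rd continuous on $[a,b]$: since $y\in C^1_{rd}$ the maps $t\mapsto y^{\sigma}(t)$ and $t\mapsto y^{\Delta}(t)$ are rd continuous, and $L$ is assumed $C^2$ (hence continuous) in its second and third arguments and rd continuous in the first, so the composition $g$ is rd continuous. Proposition~\ref{integral}(i) then gives
$$\int_a^b g(t)\Delta t=\int_{-b}^{-a}g^{\star}(s)\hat\nabla s,$$
and since the left-hand side is exactly $\int_a^b L(t,y^{\sigma}(t),y^{\Delta}(t))\Delta t$, combining with the computation above closes the argument.

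The whole proof is essentially mechanical; the only point requiring genuine care is the sign bookkeeping in the two dual relations, namely confirming that the sign change from $(y^{\star})^{\hat\nabla}(s)=-y^{\Delta}(-s)$ and the sign change from the $-v$ slot in the definition of $L^{\star}$ cancel, so that no spurious minus survives. The mild regularity check needed to invoke Proposition~\ref{integral} is the only other step worth stating explicitly.
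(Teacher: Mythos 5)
Your proof is correct and follows exactly the route the paper intends: the paper states this lemma without proof as "a consequence of the definition of the dual Lagrangian and Proposition~\ref{integral}," and your argument supplies precisely those details (unwinding $(y^{\star})^{\hat\rho}$ and $(y^{\star})^{\hat\nabla}$ via Lemmas~\ref{lem1} and~\ref{lemmaderi}, cancelling the two signs against the $-v$ slot of $L^{\star}$, and invoking Proposition~\ref{integral}(i)). The sign bookkeeping and the rd-continuity check are both handled correctly.
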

Next theorem is a result by Bohner~\cite{Boh} in one dimension (the results we will present can be obtained without this restriction, but we prefer one dimension to have an immediate comparison with the results in~\cite{ABL}).
\begin{theorem} (Euler-Lagrange Necessary Condition in Delta Setting).  
\label{eulerboh}
If $y_0$ is 
a (weak) local minimum of the variational problem~(\ref{variprob1}),
then the Euler-Lagrange equation
$$L_v^{\Delta}(t, y_0^{\sigma}(t), y_0^{\Delta}(t))=L_x(t, y_0^{\sigma}(t), y_0^{\Delta}(t)), \,\,\,\,\hbox{for all}\,\,\,\, t\in[a,b]^{\kappa},$$
holds.

\end{theorem}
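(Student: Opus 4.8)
The plan is to run the classical first-variation argument, adapted to the delta calculus and relying on the delta integration-by-parts formula~(\ref{intparts}) together with a fundamental lemma on the time scale. First I would fix an admissible variation $\eta \in C^1_{rd}([a,b])$ with $\eta(a)=\eta(b)=0$, so that for small $\epsilon$ the competitor $y_0+\epsilon\eta$ meets the boundary conditions and lies in the prescribed $C^1_{rd}$-neighborhood of $y_0$. Setting $\phi(\epsilon):=\mathcal L(y_0+\epsilon\eta)$, the fact that $y_0$ is a weak local minimum forces $\phi$ to have a minimum at $\epsilon=0$, hence $\phi'(0)=0$.

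Next I would compute the first variation. Exploiting the $C^2$-regularity of $L$ in its second and third slots to differentiate under the delta integral, I obtain
$$\phi'(0)=\int_a^b \big[\, L_x(t,y_0^\sigma,y_0^\Delta)\,\eta^\sigma(t)+L_v(t,y_0^\sigma,y_0^\Delta)\,\eta^\Delta(t)\,\big]\,\Delta t=0.$$
To trade $\eta^\Delta$ for $\eta^\sigma$, I would apply~(\ref{intparts}) to the second summand with $f=L_v(\cdot,y_0^\sigma,y_0^\Delta)$ and $g=\eta$; since $\eta(a)=\eta(b)=0$ the boundary contribution drops out, giving
$$\int_a^b \big[\, L_x(t,y_0^\sigma,y_0^\Delta)-L_v^\Delta(t,y_0^\sigma,y_0^\Delta)\,\big]\,\eta^\sigma(t)\,\Delta t=0$$
for every admissible $\eta$. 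A fundamental lemma of the calculus of variations on the time scale---stating that an rd-continuous $g$ with $\int_a^b g\,\eta^\sigma\,\Delta t=0$ for all such $\eta$ must vanish on $[a,b]^\kappa$---then yields the Euler-Lagrange equation $L_v^\Delta=L_x$.

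The delicate point, and the one I expect to be the main obstacle, is twofold. The harmless part is justifying differentiation under the integral sign, which follows from the $C^2$ hypothesis on $L$, compactness of $[a,b]$, and rd-continuity of the integrand. The genuinely subtle part is that on a general time scale one cannot assume a priori that $t\mapsto L_v(t,y_0^\sigma,y_0^\Delta)$ is delta differentiable, so the integration by parts above is not immediately licensed. The clean way around this---and the route I would actually take---is the du Bois-Reymond form of the fundamental lemma: setting $A(t)=\int_a^t L_x\,\Delta\tau$ and integrating the $L_x\,\eta^\sigma$ term by parts instead, one reduces the identity to $\int_a^b (L_v-A)\,\eta^\Delta\,\Delta t=0$, whence $L_v-A$ is constant on $[a,b]^\kappa$. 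This simultaneously proves that $L_v^\Delta$ exists and equals $L_x$, disposing of the existence issue and the Euler-Lagrange equation in one stroke. Constructing the localizing test functions $\eta$ on a time scale, carefully separating right-scattered from right-dense points, is where the real work of this fundamental lemma lies; this is precisely the content carried out in Bohner's original treatment~\cite{Boh}.
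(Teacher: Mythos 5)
Your proposal is a correct sketch, but it should be said up front that the paper does not prove Theorem~\ref{eulerboh} at all: it is imported verbatim from Bohner~\cite{Boh} and serves as the \emph{input} to the paper's duality argument, whose entire point is to avoid redoing such proofs in the nabla setting. So there is no in-paper proof to compare against; what you have reconstructed is essentially Bohner's original argument. Within that argument your outline is sound, and you correctly isolate the one genuinely delicate point: the naive integration by parts with $f=L_v(\cdot,y_0^\sigma,y_0^\Delta)$ presupposes that this function is delta differentiable, which is not known a priori, and the du Bois--Reymond route (put $A(t)=\int_a^t L_x\,\Delta\tau$, integrate the $L_x\,\eta^\sigma$ term by parts via $\int_a^b A^\Delta\eta^\sigma\,\Delta t=-\int_a^b A\,\eta^\Delta\,\Delta t$, and conclude $L_v-A\equiv\mathrm{const}$ on $[a,b]^\kappa$) is the right fix, since it delivers the existence of $L_v^\Delta$ and the identity $L_v^\Delta=L_x$ simultaneously. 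The remaining work you defer --- constructing the test functions and proving the time-scale du Bois--Reymond lemma, with the case split between right-scattered and right-dense points --- is indeed where the substance lies, and deferring it to \cite{Boh} is appropriate given that the paper itself treats the whole theorem as a black box. If you wanted your proof to mesh with the rest of this paper, the more instructive exercise would be the reverse of what the paper does: prove the nabla version (Theorem~\ref{ELratici}) directly and then obtain Theorem~\ref{eulerboh} by dualizing via Lemma~\ref{lemmaderi} and Proposition~\ref{integral}.
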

Now, we will use Bohner's theorem to prove the Euler-Lagrange equation in the {\it nabla} context. We recall that the Euler-Lagrange equation in the {\it nabla} context  was shown in~\cite{ABL}. Here we will reprove it using our technique. (Also, see Remark~\ref{inter}.)

\begin{theorem}  (Euler-Lagrange Necessary Condition in Nabla Setting).
\label{ELratici} 
If $\bar y_0$ is a local (weak) minimum for the variational problem~(\ref{variprob2}), 
then the Euler-Lagrange equation
 $${\bar {L}}_x(s, (\bar y_0)^{\rho}(s), (\bar y_0)^{\nabla}(s))=(\bar {L}_w)^{\nabla}(s, (\bar y_0)^{ \rho}(s), (\bar y_0)^{\nabla}(s))\,\,\,\,\hbox{for all}\,\,\,\, s\in[c,d]_{\kappa},$$
holds.

\end{theorem}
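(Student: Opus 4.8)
The plan is to run the Duality Principle in the direction opposite to the integration-by-parts example: I will transport the given \emph{nabla} variational problem~(\ref{variprob2}) to a \emph{delta} problem for the dual Lagrangian $\bar L^{\star}$ on the dual interval $[-d,-c]$, invoke Bohner's Theorem~\ref{eulerboh} there, and then dualize the resulting \emph{delta} Euler--Lagrange equation back into the \emph{nabla} setting. The only new ingredients beyond the lemmas of Section~4 and Lemma~\ref{corlag} are the elementary chain-rule relations between the partial derivatives of $\bar L^{\star}$ and those of $\bar L$, and a careful bookkeeping of signs.

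First I would set $Y:=(\bar y_0)^{\star}:[-d,-c]\to\R$ and observe, using Lemma~\ref{lem1} and Lemma~\ref{lemmaderi}, that $Y^{\hat\sigma}(\tau)=(\bar y_0)^{\rho}(-\tau)$ and $Y^{\hat\Delta}(\tau)=-(\bar y_0)^{\nabla}(-\tau)$. Combining this with the definition $\bar L^{\star}(s,x,v)=\bar L(-s,x,-v)$ and Proposition~\ref{integral}(ii) (equivalently, the dual form of Lemma~\ref{corlag}) gives, for every admissible $h\in C^1_{ld}([c,d])$, the identity $\int_c^d \bar L(s,h^{\rho}(s),h^{\nabla}(s))\,\nabla s=\int_{-d}^{-c}\bar L^{\star}(\tau,(h^{\star})^{\hat\sigma}(\tau),(h^{\star})^{\hat\Delta}(\tau))\,\hat\Delta\tau$. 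Since $h\mapsto h^{\star}$ is an involutive bijection between $C^1_{ld}([c,d])$ and $C^1_{rd}([-d,-c])$ (Lemma~\ref{lemmacontder}) that sends the boundary data $h(c)=A,\ h(d)=B$ to $h^{\star}(-c)=A,\ h^{\star}(-d)=B$ and preserves the relevant norm, i.e. $\|h-\bar y_0\|_{C^1_{ld}}=\|h^{\star}-Y\|_{C^1_{rd}}$, the weak local minimality of $\bar y_0$ for $\bar{\mathcal L}$ is equivalent to the weak local minimality of $Y$ for the \emph{delta} functional $\mathcal L^{\star}(u)=\int_{-d}^{-c}\bar L^{\star}(\tau,u^{\hat\sigma},u^{\hat\Delta})\,\hat\Delta\tau$.

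Applying Theorem~\ref{eulerboh} to $\bar L^{\star}$ and the minimizer $Y$ then yields, for all $\tau\in[-d,-c]^{\kappa}$,
$$(\bar L^{\star})_v^{\hat\Delta}\big(\tau,Y^{\hat\sigma}(\tau),Y^{\hat\Delta}(\tau)\big)=(\bar L^{\star})_x\big(\tau,Y^{\hat\sigma}(\tau),Y^{\hat\Delta}(\tau)\big).$$
To push this back, I would record the chain-rule identities $(\bar L^{\star})_x(\tau,x,v)=\bar L_x(-\tau,x,-v)$ and $(\bar L^{\star})_v(\tau,x,v)=-\bar L_w(-\tau,x,-v)$, which follow directly from $\bar L^{\star}(s,x,v)=\bar L(-s,x,-v)$. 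Substituting $Y^{\hat\sigma}(\tau)=(\bar y_0)^{\rho}(-\tau)$ and $Y^{\hat\Delta}(\tau)=-(\bar y_0)^{\nabla}(-\tau)$ and writing $s=-\tau$, the right-hand term collapses to $\bar L_x\big(s,(\bar y_0)^{\rho}(s),(\bar y_0)^{\nabla}(s)\big)$, which is exactly the left side of the claimed \emph{nabla} equation.

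The main obstacle is the left-hand (derivative) term $(\bar L^{\star})_v^{\hat\Delta}$, where the sign conventions and the $\hat\Delta$-to-$\nabla$ transfer must be chained correctly. Here I would define the scalar function $Q(s):=\bar L_w\big(s,(\bar y_0)^{\rho}(s),(\bar y_0)^{\nabla}(s)\big)$ and note, via the chain-rule identity above, that $(\bar L^{\star})_v\big(\tau,Y^{\hat\sigma}(\tau),Y^{\hat\Delta}(\tau)\big)=-Q(-\tau)=-Q^{\star}(\tau)$; that is, this quantity is (up to sign) the dual function of $Q$. The $C^2$-regularity of $\bar L$ together with $\bar y_0\in C^1_{ld}$ guarantees $Q$ is \emph{nabla} differentiable, so Lemma~\ref{lemmaderi} applies and gives $(Q^{\star})^{\hat\Delta}(\tau)=-Q^{\nabla}(-\tau)$, whence $(\bar L^{\star})_v^{\hat\Delta}\big(\tau,\dots\big)=-\,(Q^{\star})^{\hat\Delta}(\tau)=Q^{\nabla}(-\tau)$. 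Evaluating at $s=-\tau$ identifies this with $(\bar L_w)^{\nabla}\big(s,(\bar y_0)^{\rho}(s),(\bar y_0)^{\nabla}(s)\big)$. Finally, since $[-d,-c]^{\kappa}$ corresponds to $[c,d]_{\kappa}$ under dualization, equating the two transported terms yields the asserted Euler--Lagrange equation for all $s\in[c,d]_{\kappa}$, completing the proof.
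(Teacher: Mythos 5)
Your proposal is correct and follows essentially the same route as the paper's own proof: dualize the problem via Lemma~\ref{corlag} to a \emph{delta} problem for $\bar L^{\star}$ on $[-d,-c]$, apply Theorem~\ref{eulerboh}, use the chain-rule identities for $(\bar L^{\star})_x$ and $(\bar L^{\star})_v$, and convert the $\hat\Delta$-derivative of the momentum term back to a $\nabla$-derivative via Lemma~\ref{lemmaderi} (your $Q$ plays exactly the role of the paper's $p$). Your explicit check that $h\mapsto h^{\star}$ preserves boundary data and the $C^1$ norms is a welcome bit of extra care on a step the paper passes over quickly.
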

\begin{proof} 
This theorem is essentially a corollary of Theorem~\ref{eulerboh}.
Since $\bar y_0$ is a local minimum for ~(\ref{variprob2}), it follows from Lemma~\ref{corlag}  that $\bar y_0^{\star}$ is local minimum for the variational problem
\begin{equation}\label{variprob3}
(\bar {\mathcal L})^{\star}(g)= \int_{-d}^{-c} \bar L^{\star}( t, g^{\hat\sigma}(t), g^{\hat\Delta}(t))\hat\Delta t,\,\,\,\,\,\, g(-c)= A,\,\,\, g(-d)=B,
\end{equation}
where $g\in C^1_{rd}$.

The variational problem~(\ref{variprob3}) is the same as~(\ref{variprob1}) for the Lagrangian $\bar L^{\star}$ (with $a=-d$, $b=-c$, $\alpha=B$ and $\beta=A$). Hence, we can apply Theorem~\ref{eulerboh}. The Euler-Lagrange equation  for the Lagrangian $\bar L^{\star}$ is given by :
\begin{equation}\label{ELBoh}(\bar L^{\star}_v)^{\hat\Delta}(t, (\bar y_0^{\star})^{\hat \sigma}(t), (\bar y_0^{\star})^{\hat\Delta}(t))=\bar L^{\star}_x(t, (\bar y_0^{\star})^{\hat \sigma}(t), (\bar y_0^{\star})^{\hat\Delta}(t)), \,\,\,\,\hbox{for all}\,\,\,\, t\in[-d,-c]^{\kappa}.
\end{equation}
Our goal is now to rewrite~(\ref{ELBoh}) for the Lagrangian $\bar L$. It is easy to check that:
$$\bar L^{\star}_v(t,x,v)=-\bar L_w(-t,x, -v),\,\,\,\,\hbox{and}\,\,\,\, \bar L^{\star}_x(t,x,v)=\bar L_x(-t,x, -v),$$
where $\bar L_w$ is the partial derivative of $\bar L$ with respect to the third variable. Let us substitute $x$ by $ (\bar y_0^{\star})^{\hat \sigma}(t)$, and $v$ by $(\bar y_0^{\star})^{\hat\Delta}(t)$, in the previous identities. We get:

$$\bar L^{\star}_v(t,(\bar y_0^{\star})^{\hat \sigma}(t),(\bar y_0^{\star})^{\hat\Delta}(t))=-\bar L_w(-t,(\bar y_0)^{\rho}(-t), (\bar y_0)^{\nabla}(-t)),$$
and
$$\bar L^{\star}_x(t,(\bar y_0^{\star})^{\hat \sigma}(t), (\bar y_0^{\star})^{\hat\Delta}(t))=\bar L_x(-t,(\bar y_0)^{\rho}(-t), (\bar y_0)^{\nabla}(-t)).$$
From Lemma~\ref{lemmaderi}, it follows that:
$$g^{\hat\Delta}(t)=p^{\nabla}(-t)\,\,\,\,\hbox{ for all}\,\,\,\, t\in [-d,-c]^{\kappa},$$
where
$$g(t)=\bar L^{\star}_v(t,(\bar y_0^{\star})^{\hat \sigma}(t), (\bar y_0^{\star})^{\hat\Delta}(t)) \,\,\,\hbox{and}\,\,\,\ p(-t)=\bar L_w(-t,(\bar y_0)^{\rho}(-t), (\bar y_0)^{\nabla}(-t)).$$
Next, let $s\in[c,d]_{\kappa}$ and set $-t=s$. Then by~(\ref{ELBoh}),
\begin{equation}\label{eqnew}
p^{\nabla}(s)=\bar L_x(s,(\bar y_0)^{\rho}(s), (\bar y_0)^{\nabla}(s)),
\end{equation}
and, finally, revealing the definition of $p$, from~(\ref{eqnew}) we obtain the Euler-Lagrange equation in the {\it nabla} setting:
$${\bar {L}}_x(s, (\bar y_0)^{\rho}(s), (\bar y_0)^{\nabla}(s))=(\bar {L}_w)^{\nabla}(s, (\bar y_0)^{ \rho}(s), (\bar y_0)^{\nabla}(s))\,\,\,\,\hbox{for all}\,\,\,\, s\in[c,d]_{\kappa}.$$
\end{proof}

\begin{remark}\label{inter} Theorem~\ref{ELratici} states the same result as the main theorem proven in~\cite{ABL}.  The only difference is the interval of points for which the Euler-Lagrange equation holds.
 In fact, since in~\cite{ABL} the interval of integration for the Lagrangian is  $[\rho^2(a)), \rho(b)]$, it follows from our results that  the Euler-Lagrange equation has to hold in the interval $[\rho^2(a)), \rho(b)]_{\kappa}$ and not $[\rho(a)), b]$ as in~\cite{ABL}. This claim can be also justified by noticing that,  in order of applying~Lemma~$2.1$ in~\cite{ABL}, the test functions have to vanish at the limit points of integration. Another observation about such  interval was pointed out in~\cite{Tor}.
 \end{remark}

\begin{remark} Theorem~\ref{ELratici} can be easily generalized to the higher-order 
results of~\cite{tormar} by applying our Duality Principle to the results in \cite{Torfer}.
 \end{remark}
\subsection{Weierstrass Necessary Condition on Time Scales}
 
We first review a few definitions. Let $L$ be a Lagrangian.  
Let $E : [a , b ]^{\kappa}\times\R^3 \rightarrow \R$ be the function defined as 
$$E (t , x, r, q) = L (t , x, q) - L (t , x, r) - (q - r)  L_r (t , x, r). $$
This function $E$ is called the Weierstrass excess function of $L$.

The Weierstrass necessary optimality condition on time scales was proven in the {\it delta} setting in~\cite{tormal}. Their theorem states as follows:
\begin{theorem} \label{weiteo}(Weierstrass Necessary Optimality Condition with Delta Setting).
\label{weidel}
Let $\T$ be a time scale, $a$ and $b\in \T$, $a < b$ . Assume that the function $L(t , x, r)$ in~(\ref{variprob1}) satisfies the following condition: 
\begin{equation}\label{weiineqdel}
\mu (t ) L (t , x,\gamma r_1 + (1 -\gamma)r_2 ) \leq  \mu(t )\gamma L(t , x, r_1 ) + \mu(t )(1 - \gamma) L (t , x, r_2 ), 
\end{equation}
$\,\,\,\hbox{for each}\,\,\,\, (t , x)\in [a , b ]^{\kappa} \times \R, \,\,\, \hbox{all} \,\,r_1 , r_2\in\R, \gamma \in [0, 1].$

Let  $\bar x$ be a piecewise continuous function. If $\bar{x}$ is a 
strong local minimum for~(\ref{variprob1}), then 
$$E [t , \bar x^{\sigma}(t ), \bar{x}^ {\Delta}(t ), q]\geq 0
\,\,\,\hbox{ for all}\,\,\,\, t \in [a , b ]^{\kappa} \,\,\,\hbox{and}\,\,\, q\in\R,$$
where we replace $\bar x^{\Delta}(t )$ by $\bar x^{\Delta}(t -)$ and  
$\bar x^{\Delta} (t +)$  at finitely many points $t$ where  
$\bar x^{\Delta} (t )$ does not exist. 
\end{theorem}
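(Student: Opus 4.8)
The plan is to establish the inequality by the classical Weierstrass (needle) variation argument, adapted to the delta calculus on a general time scale; duality is \emph{not} the natural tool here, precisely because this delta-setting statement is the base result from which the nabla analogue is meant to be extracted afterwards. First I would fix an arbitrary $t_0\in[a,b]^{\kappa}$ and an arbitrary $q\in\R$, and construct a one-parameter family of admissible competitors $x_\epsilon$ that coincide with $\bar x$ outside a small set to the right of $t_0$, satisfy the boundary conditions $x_\epsilon(a)=\alpha$, $x_\epsilon(b)=\beta$, and remain within the strong-minimum neighborhood $||x_\epsilon-\bar x||_{0,r}\le\delta$. The defining feature of the needle is that on a small piece of $\T$ to the right of $t_0$ the delta-derivative is forced to take the value $q$, while on an adjacent compensating piece the competitor is steered back so as to rejoin $\bar x$ at a later node. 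Strong minimality then gives $\mathcal L(x_\epsilon)-\mathcal L(\bar x)\ge 0$ for all small $\epsilon>0$.

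The second step is to expand this increment to first order. The needle piece contributes, to leading order, $\epsilon\,[L(t_0,\bar x^{\sigma}(t_0),q)-L(t_0,\bar x^{\sigma}(t_0),\bar x^{\Delta}(t_0))]$, while the first-order cost of restoring the boundary value along the compensating piece is $-\epsilon\,(q-\bar x^{\Delta}(t_0))\,L_r(t_0,\bar x^{\sigma}(t_0),\bar x^{\Delta}(t_0))$; here I would use that $\bar x$, being in particular a weak local minimum, satisfies the delta Euler--Lagrange equation of Theorem~\ref{eulerboh} wherever $\bar x^{\Delta}$ exists, so that no further interior terms survive. Collecting terms, the increment equals $\epsilon\,E[t_0,\bar x^{\sigma}(t_0),\bar x^{\Delta}(t_0),q]+o(\epsilon)$; dividing by $\epsilon$ and letting $\epsilon\to0^{+}$ yields $E\ge0$ at every right-dense $t_0$.

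The delicate point, and the main obstacle, is the treatment of right-scattered points, where the gap $[t_0,\sigma(t_0)]$ has fixed length $\mu(t_0)>0$ and cannot be shrunk, so the step ``let $\epsilon\to0^{+}$'' is unavailable. This is exactly where hypothesis~(\ref{weiineqdel}) enters: at such a point the needle variation can only produce a $\mu(t_0)$-weighted convex combination of the slopes $\bar x^{\Delta}(t_0)$ and $q$, and the weighted midpoint-convexity inequality~(\ref{weiineqdel}) is what guarantees that the corresponding discrete increment still has the correct sign, forcing $E[t_0,\bar x^{\sigma}(t_0),\bar x^{\Delta}(t_0),q]\ge0$ in the scattered case as well. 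I would organize the estimate of the increment so that both cases are subsumed in a single formula, with $\mu(t_0)=0$ recovering the right-dense computation automatically.

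Finally, I would dispose of the finitely many points at which $\bar x^{\Delta}$ fails to exist by rerunning the same construction with the one-sided delta-derivatives $\bar x^{\Delta}(t-)$ and $\bar x^{\Delta}(t+)$ in place of $\bar x^{\Delta}(t)$, which gives the stated conclusion there. The part requiring the most care is the bookkeeping in the increment expansion: ensuring the compensating piece is admissible on an arbitrary (possibly fractal) time scale while keeping the $C^0$ perturbation of size $O(\epsilon)$, and verifying that the remainder is genuinely $o(\epsilon)$ uniformly enough to pass to the limit.
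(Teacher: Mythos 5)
You have set out to prove Theorem~\ref{weiteo} directly, but note that the paper itself offers no proof of this statement: it is quoted verbatim from the reference of Malinowska and Torres (\cite{tormal}) and serves only as the \emph{input} to the duality argument, which then produces the nabla version. So there is no proof in the paper to compare against; your instinct that duality is not the tool here is exactly the paper's own position. Your needle-variation plan is the standard route and is essentially how the cited source establishes the result, so as an outline it is sound: strong minimality gives a one-sided increment inequality, the first-order expansion produces the excess function, and the Euler--Lagrange equation (available because a strong local minimum is in particular a weak one, the $C^1_{rd}$-ball being contained in the $\|\cdot\|_{0,r}$-ball) kills the interior terms.

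The one place your plan as written would break is the right-scattered case, and your proposed fix is not quite the right one. At a right-scattered $t_0$ the delta derivative is the difference quotient $(\bar x(\sigma(t_0))-\bar x(t_0))/\mu(t_0)$, so forcing $x_\epsilon^{\Delta}(t_0)=q$ changes $x_\epsilon(\sigma(t_0))$ by the \emph{fixed} amount $(q-\bar x^{\Delta}(t_0))\,\mu(t_0)$; for general $q$ this competitor leaves every $\|\cdot\|_{0,r}$-ball, so no needle variation is admissible there and no ``discrete increment of the correct sign'' can be extracted from minimality. The correct observation is that at such points no variation is needed at all: dividing~(\ref{weiineqdel}) by $\mu(t_0)>0$ says $L(t_0,x,\cdot)$ is convex in its third argument, and the supporting-line inequality $L(t_0,x,q)\geq L(t_0,x,r)+(q-r)L_r(t_0,x,r)$ for convex functions is precisely $E[t_0,x,r,q]\geq 0$, with no reference to $\bar x$ being a minimizer. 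The variation argument is then only required at right-dense points, where $\mu(t_0)=0$ makes~(\ref{weiineqdel}) vacuous and the classical shrinking-needle computation goes through as you describe. With that repartition of the two cases your sketch becomes the standard proof.
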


Let $E$ be  the Weierstrass excess function of $\bar L$. 
\begin{theorem} (Weierstrass Necessary Optimality Condition with Nabla Setting).
Let $\T$ be a time scale, $a$  and $b\in \T$, $a < b$ . Assume that the function $\bar L (t , x, r)$ in~(\ref{variprob2}) satisfies the following condition: 
\begin{equation}\label{weiineqnab}
\nu (t ) \bar L (t , x,\gamma r_1 + (1 -\gamma)r_2 ) \leq  \nu(t )\gamma \bar L (t , x, r_1 ) + \nu(t )(1 - \gamma)\bar  L (t , x, r_2 ),
\end{equation}
$\,\,\,\hbox{for each}\,\,\,\, (t , x)\in [a , b ]_{\kappa} \times \R, \,\,\, \hbox{all} \,\,r_1 , r_2\in\R, \gamma \in [0, 1].$

Let  $\bar x$ be a piecewise continuous function. If $\bar{x}$ is a 
strong local minimum for ~(\ref{variprob2}), then 
$$E [t , \bar x^{\rho}(t ), \bar{x} ^{\nabla}(t ), q]\geq 0
\,\,\,\hbox{ for all}\,\,\,\, t \in [a , b ]_{\kappa} \,\,\,\hbox{and}\,\,\, q\in\R$$
where we replace $\bar x^{\nabla}(t )$ by $\bar x^{\nabla}(t -)$ and  
$\bar x^{\nabla}(t +)$  at finitely many points $t$ where  
$\bar x^{\nabla} (t )$ does not exist. 
\end{theorem}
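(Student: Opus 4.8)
The plan is to deduce this nabla result directly from its delta counterpart, Theorem~\ref{weidel}, exactly in the spirit of the proof of Theorem~\ref{ELratici}: pass to the dual time scale, apply the known delta theorem to the dual Lagrangian, and translate the conclusion back via the dual correspondences. Assume $\bar x$ is a strong local minimum of the nabla problem~(\ref{variprob2}) on $[c,d]$. First I would dualize the variational problem. By the functional-value identity of Proposition~\ref{integral} (in its piecewise form, so that it covers the merely piecewise continuous $\bar x$), together with the fact that the norm $\|\cdot\|_{0,l}$ dualizes to $\|\cdot\|_{0,r}$ because $(\bar x^{\star})^{\hat\sigma}(t)=\bar x^{\rho}(-t)$, the function $\bar x^{\star}$ is a strong local minimum of a delta problem of the form~(\ref{variprob1}) on $[-d,-c]$ with Lagrangian $\bar L^{\star}(s,x,v)=\bar L(-s,x,-v)$. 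I would also record here that the finitely many points at which $\bar x^{\nabla}$ fails to exist correspond, under $s\mapsto -s$, to the finitely many points at which $(\bar x^{\star})^{\hat\Delta}$ fails to exist, and that left/right limits are interchanged by duality (Lemma~\ref{lemmacont}), so the $t-$, $t+$ clauses match up.

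Second, I would verify that the hypothesis transfers, i.e.\ that $\bar L^{\star}$ satisfies the delta convexity inequality~(\ref{weiineqdel}) on $[-d,-c]^{\kappa}$. This is a direct substitution: using $\hat\mu=\nu^{\star}$ (Lemma~\ref{lem2}) and $\bar L^{\star}(s,x,v)=\bar L(-s,x,-v)$, inequality~(\ref{weiineqdel}) written for $\bar L^{\star}$ at $(s,x)$ with slopes $r_1,r_2$ reduces, after the change $s\mapsto -s$, $r_i\mapsto -r_i$, to~(\ref{weiineqnab}) for $\bar L$ at $(-s,x)$ with slopes $-r_1,-r_2$. This holds because $-s$ ranges over $[c,d]_{\kappa}$ as $s$ ranges over $[-d,-c]^{\kappa}$ and~(\ref{weiineqnab}) is assumed for all real slopes. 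Hence Theorem~\ref{weidel} applies to $\bar L^{\star}$ and $\bar x^{\star}$, yielding
$$E^{\star}[t,(\bar x^{\star})^{\hat\sigma}(t),(\bar x^{\star})^{\hat\Delta}(t),q]\geq 0\qquad\text{for all }t\in[-d,-c]^{\kappa},\ q\in\R,$$
where $E^{\star}$ denotes the Weierstrass excess function of $\bar L^{\star}$.

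The last step, and the only one requiring genuine care, is translating this conclusion back to $\bar L$. The key is the sign bookkeeping in the excess function: substituting $\bar L^{\star}(t,x,v)=\bar L(-t,x,-v)$ and the induced relation $\bar L^{\star}_{r}(t,x,r)=-\bar L_{r}(-t,x,-r)$ into the definition of $E^{\star}$ collapses the terms to the identity $E^{\star}(t,x,r,q)=E(-t,x,-r,-q)$. Combining this with $(\bar x^{\star})^{\hat\sigma}(t)=\bar x^{\rho}(-t)$ and $(\bar x^{\star})^{\hat\Delta}(t)=-\bar x^{\nabla}(-t)$ (Lemma~\ref{lemmaderi} and the dual-shift identity from the integration-by-parts example), the displayed inequality becomes $E[-t,\bar x^{\rho}(-t),\bar x^{\nabla}(-t),-q]\geq 0$. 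Setting $s=-t$, which ranges over $[c,d]_{\kappa}$ as $t$ ranges over $[-d,-c]^{\kappa}$, and observing that $-q$ ranges over all of $\R$ as $q$ does (so the sign flip in the last slot is harmless), I obtain $E[s,\bar x^{\rho}(s),\bar x^{\nabla}(s),q]\geq 0$ for all $s\in[c,d]_{\kappa}$ and $q\in\R$, which is precisely the claimed nabla Weierstrass condition.

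I expect the main obstacle to be exactly the verification of the excess-function identity $E^{\star}(t,x,r,q)=E(-t,x,-r,-q)$, since the derivative variable carries a sign change under duality while the position variable does not, and one must confirm that the $-(q-r)L_r$ term transforms consistently. The accompanying bookkeeping of the left/right interchange that governs the $t-$ and $t+$ replacements at the exceptional nondifferentiability points is routine but should be stated explicitly to keep the argument complete.
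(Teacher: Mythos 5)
Your proposal is correct and follows essentially the same route as the paper's own proof: dualize the minimizer and the convexity hypothesis, apply the delta Weierstrass theorem to $\bar L^{\star}$, and translate back via the excess-function identity $E^{\star}(t,x,r,q)=E(-t,x,-r,-q)$ together with $(\bar x^{\star})^{\hat\sigma}(s)=\bar x^{\rho}(-s)$ and $(\bar x^{\star})^{\hat\Delta}(s)=-\bar x^{\nabla}(-s)$. Your treatment is in fact slightly more explicit than the paper's on the norm dualization $\|\cdot\|_{0,l}\leftrightarrow\|\cdot\|_{0,r}$ and on the matching of the exceptional nondifferentiability points, which the paper only asserts in passing.
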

\begin{proof} 
Let $\bar L^{\star}$ be the dual Lagrangian of $\bar L$. It is easy to prove (similarly as we did in Theorem~\ref{ELratici}, although here $\bar x$ is a strong minimum),  that $\bar{x}^{\star}$ is a strong local minimum for ~(\ref{variprob1}). 
Then,~(\ref{weiineqnab}) can be written on the dual  time scale $\T^{\star}$ as
$$\bar\mu (s) \bar L^{\star} (s , x,-\gamma r_1 -(1 -\gamma)r_2 ) \leq  \bar\mu(s)\gamma \bar L^{\star} (s , x,- r_1 ) + \bar\mu(s)(1 - \gamma) \bar L^{\star} (s , x, -r_2 ),$$
$\,\,\,\hbox{for each}\,\,\,\, (s , x)\in [-b , -a ]^{\kappa} \times \R, \,\,\, \hbox{all} \,\,r_1 , r_2\in\R, \gamma \in [0, 1].$

We recognize that the last inequality is  the same as~(\ref{weiineqdel}) in Theorem~\ref{weidel} for the Lagrangian $L^{\star}$. Hence, we apply Theorem~\ref{weidel},
$$E^{\star} [s , (\bar x^{\star})^{\hat \sigma}(s), (\bar{x}^{\star})^ {\hat\Delta}(s ), q]\geq 0
\,\,\,\hbox{ for all}\,\,\,\,s\in [-b , -a ]^{\kappa} \,\,\,\hbox{and}\,\,\, q\in\R,$$
where $E^{\star} $ is the Weierstrass excess function of $\bar L^{\star}$. 

Also, we notice that
$$E^{\star} [s , (\bar x^{\star})^{\hat \sigma}(s), (\bar{x}^{\star})^ {\hat\Delta}(s ), q]=E[-s,  (\bar x^{\star})^{\hat \sigma}(s), -(\bar{x}^{\star})^ {\hat\Delta}(s ), -q],$$
 where $E$ is the Weierstrass excess function of $\bar L$. 

Finally,
$$E [t , \bar x^{\rho}(t ), \bar{x}^ {\nabla}(t ), -q]\geq 0\,\,\,\hbox{ for all}\,\,\,\, t \in [a , b ]_{\kappa} \,\,\,\hbox{and all}\,\,\, q\in\R,$$
because
$$(\bar x^{\star})^{\hat \sigma}(s)= \bar x^{\rho}(-s ), \,\,\, \hbox{and}\,\,\,  (\bar{x}^{\star})^ {\hat\Delta}(s )=-\bar{x}^ {\nabla}(-s).$$
We observe that, the fact that  we can replace 
$$\bar x^{\nabla}(t )$$
 by
$$\,\,\,\, \bar x^{\nabla}(t -)\,\,\, \hbox{ and}\,\,\,\,  \bar x^{\nabla}(t +)\,\,\,\,\hbox{ at finitely many points}\,\,\, t,$$ 
where  
$$\bar x^{\nabla} (t )$$
does not exist, follows as well from Theorem~\ref{weiteo}.
\end{proof}
\vskip 1cm

\newpage
\begin{appendix}
\section{Table of dual objects}
Based on the above definitions, remarks and lemmas we summarize in the following table for each ``object'' its dual one. Naturally, this table may be extended to more objects. \\
 \renewcommand{\arraystretch}{0.8}  
  \begin{table}[h!b!p!]
  \caption{Dual objects}

  \begin{tabular}{|c|c|clcl}    \hline
    \textbf{Object} & \textbf{Corresponding dual object}\\ \hline
    $\T$             &       $\T^{\star}$                  \\ \hline
     $f:\T\rightarrow \R$            &    $f^{\star}:\T^{\star}\rightarrow\R$                              \\ \hline
      $f^{\star}:\T^{\star}\rightarrow\R$               &   $f:\T\rightarrow \R$                               \\ \hline
    $t_0$ right-dense (left-dense)             &      $-t_0$ left-dense (right-dense )                      \\ \hline
  
   $t_0$ right-scattered (left-scattered) &  $-t_0$ left-scattered (right-scattered)  \\ \hline
  
 $ \mu$,  $ \nu$  & $\hat \nu(=\mu^{\star}) $, $\hat \mu(=\nu^{\star})$ \\ \hline

 $\sigma$,  $\rho$ & $\hat \rho(=-\sigma^{\star}$),  $\hat\sigma(=-\rho^{\star}$) \ \\ \hline

 $f^{\Delta}(t_0)$ & $-(f^{\star})^{\hat\nabla}(-t_0)$ \\ \hline
 $f^{\nabla}(t_0)$ & $-(f^{\star})^{\bar \Delta}(-t_0)$ \\ \hline
 $f^{\Delta}(t_0)$ & $-((f^{\star})^{\bar \nabla})^{\star}(t_0)$ \\ \hline
 $(f^{\Delta})^{\star}(-t_0)$ & $-((f^{\star})^{\hat\nabla})(-t_0))$ \\ \hline
 $f\in C_{rd}$ ( $f\in C_{ld}$) &  $f^{\star}\in C_{ld}$ ($f^{\star}\in C_{rd}$) \\ \hline

  $f\in C^1_{rd}$ ( $f\in C^1_{ld}$) &  $f^{\star}\in C^1_{ld}$ ($f^{\star}\in C^1_{rd}$) \\ \hline

 $f\in C_{prd}$ ( $f\in C_{pld}$) &  $f^{\star}\in C_{pld}$  ( $f^{\star}\in C_{prd}$)\\ \hline

 $f\in C^1_{prd}$ ($f\in C^1_{pld}$)&  $f^{\star}\in C^1_{pld}$( $f^{\star}\in C^1_{prd}$)  \\ \hline

 $\int_a^b f(t)\Delta t$ & $\int_{-b}^{-a}f^{\star}(s)\hat\nabla s$ \\ \hline
$L:\T\times \R^2\rightarrow \R$, $L(t,x,v)$ & $L^{\star}:\T^{\star}\times \R^2\rightarrow \R$, $L^{\star}(s,x,w) (=L(-s,x,-w))$\\ \hline 
  \end{tabular}
\end{table}

{\bf Acknowledgments.}~The author would like to thank Professors D.~Torres and A.~Malinowska for having brought this problem to her attention while they were visiting the University of Texas, at Austin, in the Fall 2009. Also, she thanks them for patiently reading some drafts of this article and making very helpful suggestions.

\end{appendix}


\begin{thebibliography}{10}
\bibitem {ABL} F. M. Atici, D. C. Biles, A. Lebedinsky 
\newblock{\em An application of time scales to economics}
\newblock Mathematical and Computer Modelling 43 (2006), 718--726.
\bibitem{Atici} F. M. Atici, G.S. Guseinov
\newblock {\em On GreenÕs functions and positive solutions for boundary value problems on time scales}
\newblock J. Comput. Appl. Math. 141 (2002), 75--99. 
\bibitem {AMC} F. M. Atici, C.S. McMahan 
\newblock {\em A Comparison in the Theory of Calculus of Variations 
on Time Scales with an Application to the Ramsey Model }
\newblock Nonlinear Dynamics and Systems Theory, 9 (1) (2009) 1--10.
\bibitem{Boh} M. Bohner
\newblock {\em Calculus of variations on time scales}
\newblock Dynam. Systems Appl. 13 (3Ð4) (2004), 339--349. 
\bibitem{BohPet} M. Bohner, A. Peterson
\newblock {\em Dynamic equations on time scales: an introduction with applications}
\newblock Birkhauser, Boston, Base, Berlin, (2001). 
\bibitem{Tor}  R. A. C. Ferreira, D. F. M. Torres
\newblock{\em Remarks on the calculus of variations on time scales}
\newblock Int. J. Ecol. Econ. Stat. 9 (2007), no. F07, 65--73.
\bibitem{Torfer} R. A. C. Ferreira, D. F. M. Torres  
\newblock {\em Higher-order calculus of variations on time scales}
\newblock Mathematical control theory and finance, Springer, Berlin, (2008), 149--159.
\bibitem{GuSi} M. G\"{u}rses, G. S. Guseinov, B. Silindir
\newblock{\em Integrable equations on time scales}
\newblock J. Math. Phys. 46  no. (11), 113510, (2005),  22 pp.
\bibitem{Hil} S. Hilger
\newblock{\em Ein Ma§kettenkalkŸl mit Anwendung auf Zentrumsmannigfaltigkeiten}
\newblock Ph.D. Thesis, Universtat Wurzburg, 1988. 
\bibitem{Hilzei} R. Hilscher, V. Zeidan
\newblock {\em Calculus of variations on time scales: Weak local piecewise $C^{1}_{rd}$  solutions with variable endpoints}
\newblock J. Math. Anal. Appl. 289 (1) (2004), 143--166.
\bibitem{tormal} A. B. Malinowska, D.F. M. Torres
\newblock {\em Strong minimizers of the calculus of variations on time scales and the Weierstrass condition}
\newblock Proceedings of the Estonian Academy of Sciences, Vol.~58, no.~4, 2009, 205--212.
\bibitem{tormar} N. Martins, D. F. M. Torres
\newblock{\em Calculus of variations on time scales with nabla derivatives}
\newblock Nonlinear Analysis Series A: Theory, Methods \& Applications, 71 (2009), no.~12, pp. e763--e773.
\end{thebibliography}
\end{document}